\title[Transfinite game values in infinite draughts]{Transfinite game values in infinite draughts}
\author{Joel David Hamkins}
\address[Joel David Hamkins]
{Professor of Logic, University of Oxford \&\ Sir Peter Strawson Fellow, University College, Oxford}
         \email{joeldavid.hamkins@philosophy.ox.ac.uk}
         \urladdr{http://jdh.hamkins.org}
\author{Davide Leonessi}
\address[Davide Leonessi]
{St Hugh's College, Oxford}
\email{leonessi@maths.ox.ac.uk}
\urladdr{http://leonessi.org}
\thanks{This article is adapted from chapter 3 of the second author's MSc dissertation~\cite{Leonessi2021:MSc-dissertation-transfinite-game-values-in-infinite-games}, for which he earned a distinction at the University of Oxford in September 2021. Commentary can be made about this article on the first author's blog at \href{http://jdh.hamkins.org/transfinite-game-values-in-infinite-draughts}{http://jdh.hamkins.org/transfinite-game-values-in-infinite-draughts}.}
\newtheorem{theorem}{Theorem}
\newtheorem*{theorem*}{Theorem}
\newtheorem*{maintheorem*}{Main Theorem}
\newtheorem*{maintheorems*}{Main Theorems}
\newtheorem*{corollary*}{Corollary}
\newtheorem*{corollaries*}{Corollaries}
\theoremstyle{definition}
\newtheorem{definition}[theorem]{Definition}
\newtheorem*{definition*}{Definition}
\newtheorem*{question*}{Question}
\newtheorem*{questions*}{Questions}
\newtheorem*{mainquestion*}{Main Question} 
\newtheorem*{openquestion*}{Open Question} 
\theoremstyle{remark}
\newcommand{\QED}{\end{proof}}
\def\proclaim[#1]{{\bf #1}}
\def\BF#1.{{\bf #1.}}
\def\says#1:#2\par{\item[#1] #2\par}
\newcommand\p{\frak{p}}
\newcommand{\dotminus}{\mathbin{\text{\@dotminus}}}
\newcommand{\@dotminus}{%
  \ooalign{\hidewidth\raise1ex\hbox{.}\hidewidth\cr$\m@th-$\cr}%
}
\newcommand{\set}[1]{\{\,{#1}\,\}}
\renewcommand{\setminus}{\raise.3ex\hbox{\rotatebox{-20}{$-$}}} 
\newcommand{\smalllt}{\mathrel{\mathchoice{\raise2pt\hbox{$\scriptstyle<$}}{\raise1pt\hbox{$\scriptstyle<$}}{\raise0pt\hbox{$\scriptscriptstyle<$}}{\scriptscriptstyle<}}}
\newcommand{\smallleq}{\mathrel{\mathchoice{\raise2pt\hbox{$\scriptstyle\leq$}}{\raise1pt\hbox{$\scriptstyle\leq$}}{\raise1pt\hbox{$\scriptscriptstyle\leq$}}{\scriptscriptstyle\leq}}}
   \def\DHLhksqrt#1#2{%
   \setbox0=\hbox{$#1\sqrt{#2\,}$}\dimen0=\ht0
   \advance\dimen0-0.2\ht0
   \setbox2=\hbox{\vrule height\ht0 depth -\dimen0}%
   {\box0\lower0.4pt\box2}}
\newcommand{\boolval}[1]{\mathopen{\lbrack\!\lbrack}\,#1\,\mathclose{\rbrack\!\rbrack}}
\def\[#1]{\boolval{#1}}
\newbox\gnBoxA
\newbox\gnBoxB
\newdimen\gnCornerHgt
\newdimen\gnArgHgt
\def\gcode #1{%
\setbox\gnBoxA=\hbox{$#1$}%
\setbox\gnBoxB=\hbox{$\bar #1$}%
\gnArgHgt=\ht\gnBoxB%
\ifnum     \gnArgHgt<\gnCornerHgt \gnArgHgt=0pt%
\else \advance \gnArgHgt by -\gnCornerHgt%
\fi \raise\gnArgHgt\hbox{\tiny$\ulcorner$} \box\gnBoxA %
\raise\gnArgHgt\hbox{\tiny$\urcorner$}}
\newcommand{\UnderTilde}[1]{{\setbox1=\hbox{$#1$}\baselineskip=0pt\vtop{\hbox{$#1$}\hbox to\wd1{\hfil$\sim$\hfil}}}{}}
\newcommand{\Undertilde}[1]{{\setbox1=\hbox{$#1$}\baselineskip=0pt\vtop{\hbox{$#1$}\hbox to\wd1{\hfil$\scriptstyle\sim$\hfil}}}{}}
\newcommand{\undertilde}[1]{{\setbox1=\hbox{$#1$}\baselineskip=0pt\vtop{\hbox{$#1$}\hbox to\wd1{\hfil$\scriptscriptstyle\sim$\hfil}}}{}}
\newcommand{\UnderdTilde}[1]{{\setbox1=\hbox{$#1$}\baselineskip=0pt\vtop{\hbox{$#1$}\hbox to\wd1{\hfil$\approx$\hfil}}}{}}
\newcommand{\Underdtilde}[1]{{\setbox1=\hbox{$#1$}\baselineskip=0pt\vtop{\hbox{$#1$}\hbox to\wd1{\hfil\scriptsize$\approx$\hfil}}}{}}
\def\<#1>{\left\langle#1\right\rangle}
\newcommand{\QEDbox}{\fbox{}}
\newcommand{\cell}[1]{\boxit{\hbox to 17pt{\strut\hfil$#1$\hfil}}}
\newcommand{\head}[2]{\lower2pt\vbox{\hbox{\strut\footnotesize\it\hskip3pt#2}\boxit{\cell#1}}}
\newcommand{\boxit}[1]{\setbox4=\hbox{\kern2pt#1\kern2pt}\hbox{\vrule\vbox{\hrule\kern2pt\box4\kern2pt\hrule}\vrule}}
\newcommand{\Col}[3]{\hbox{\vbox{\baselineskip=0pt\parskip=0pt\cell#1\cell#2\cell#3}}}
\newcommand{\tapenames}{\raise 5pt\vbox to .7in{\hbox to .8in{\it\hfill input: \strut}\vfill\hbox to
.8in{\it\hfill scratch: \strut}\vfill\hbox to .8in{\it\hfill output: \strut}}}
\newcommand{\Head}[4]{\lower2pt\vbox{\hbox to25pt{\strut\footnotesize\it\hfill#4\hfill}\boxit{\Col#1#2#3}}}
\newcommand{\Dots}{\raise 5pt\vbox to .7in{\hbox{\ $\cdots$\strut}\vfill\hbox{\ $\cdots$\strut}\vfill\hbox{\
$\cdots$\strut}}}
\renewcommand{\UrlFont}{} 
\addcolon\nolinkurl{#1}}\iffieldundef{eprintclass}{}{\UrlFont{\mkbibbrackets{\thefield{eprintclass}}}}}
\addcolon\nolinkurl{#1}\iffieldundef{eprintclass}{}{\UrlFont{\mkbibbrackets{\thefield{eprintclass}}}}}}
\tikzset{
 Checker/.style={circle,draw,inner sep=2.3mm,transform shape},
 RR/.style={Checker,inner sep=2mm,transform shape,fill=Tomato,draw=RawSienna,thick,double=Tomato!50},  
 rr/.style={Checker,fill=Tomato,thick,draw=RawSienna},     
 BB/.style={Checker,inner sep=2mm,transform shape,fill=CadetBlue!65!black,draw=black,thick,double=CadetBlue!70},  
 bb/.style={Checker,fill=CadetBlue!65!black,thick,draw=black},  
 thick ray/.style={{-{[scale=1]>},shorten >=-1pt,line cap=round,line width=.5mm}},
 >=Stealth,
 Square/.style={text width=1cm,text height=1cm,inner sep=0pt,outer sep=0pt,very thick,line join=bevel,transform shape},
 }
\begin{document}

\begin{abstract}
Infinite draughts, or checkers, is played just like the finite game, but on an infinite checkerboard extending without bound in all four directions. We prove that every countable ordinal arises as the game value of a position in infinite draughts. Thus, there are positions from which Red has a winning strategy enabling her to win always in finitely many moves, but the length of play can be completely controlled by Black in a manner as though counting down from a given countable ordinal.
\end{abstract}
\maketitle
\section{Introducing infinite draughts}

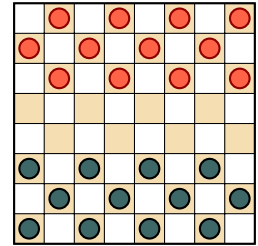
\begin{wrapfigure}[10]{r}{.27\textwidth}\vskip-2ex\hfill
\begin{tikzpicture}[scale=.4] 
 \def\n{8} 
 \def\m{8} 
 \begin{scope}[shift={(-.5,-.5)}] 
  \begin{pgfonlayer}{boardshades}
   \foreach \i in {1,...,\n} {
    \foreach \j in {1,...,\m} {
      \pgfmathsetmacro\c{100*mod(\i+\j,2)};
      \fill [white!\c!Wheat] (\i,\j) rectangle (\i+1,\j+1);
      }
     }
   \draw (1,1) grid (\n+1,\m+1);
   \draw[thick] (1,1) rectangle (\n+1,\m+1);
  \end{pgfonlayer}
 \end{scope}
 \foreach \p/\q in {1/1,3/1,5/1,7/1,2/2,4/2,6/2,8/2,1/3,3/3,5/3,7/3} {\draw (\p,\q) node[bb] {};}
 \foreach \p/\q in {2/6,4/6,6/6,8/6,1/7,3/7,5/7,7/7,2/8,4/8,6/8,8/8} {\draw (\p,\q) node[rr] {};}
\end{tikzpicture}
\captionsetup{style=rightside}
\caption{Finite draughts}
\label{Figure.Finite-draughts}
\end{wrapfigure}
The reader is likely familiar with the game of \emph{draughts}, also commonly known as \emph{checkers}, with players Black and Red taking turns to move their pieces across the checkerboard. Proceeding from the standard starting configuration shown here, both players aim to jump over their opponent's pieces and thereby to capture them---a player wins by capturing all enemy pieces or otherwise placing their opponent into a situation with no available legal moves. Although often played by children, the game of draughts nevertheless admits serious advanced play, and draughts grandmasters compete in international tournaments just as in chess.\footnote{We are grateful to Sergio Scarpetta, current world champion of English Draughts (3-move variant), who kindly reviewed the positions appearing in an earlier version of this paper, confirming our draughts analysis of them.}

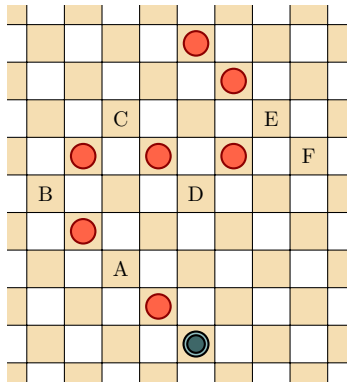
\begin{wrapfigure}[15]{l}{.38\textwidth}\vskip-1.5ex
\begin{tikzpicture}[scale=.5] 
 \def\n{10} 
 \def\m{9} 
 \begin{scope}[shift={(-.5,-.5)}] 
  \begin{pgfonlayer}{boardshades} 
   \clip (1.5,-.5) rectangle (\n+.5,\m+.5);
   \foreach \i in {-1,...,\n} {
    \foreach \j in {-1,...,\m} {
      \pgfmathsetmacro\c{100*mod(\i+\j,2)};
      \fill [white!\c!Wheat] (\i,\j) rectangle (\i+1,\j+1);
      }
     }
   \draw (-1,-1) grid (\n+1,\m+1);
  \end{pgfonlayer}
 \end{scope}
 \foreach \p/\q in {6/0} {\draw (\p,\q) node[BB] {};}
 \foreach \p/\q in {5/1,3/3,3/5,5/5,7/5,7/7,6/8} {\draw (\p,\q) node[rr] {};}
 \draw[every node/.style={scale=.8}]
     (4,2) node {A}
     (2,4) node {B}
     (4,6) node {C}
     (6,4) node {D}
     (8,6) node {E}
     (9,5) node {F};
\end{tikzpicture}\hfill
\captionsetup{style=leftside}
\caption{A position in infinite draughts, Black to play}
\label{Figure.A-position-in-infinite-draughts}
\end{wrapfigure}
\enlargethispage{40pt}
In this article we focus on \emph{infinite draughts}, an infinitary version of the game played on an infinite checkerboard, extending without bound in all four directions, like the integer lattice. Play proceeds indefinitely, perhaps infinitely. Although we might imagine a friendly game of infinite draughts at the cafe, the game is more often considered by curious mathematicians, who investigate what it would be like to proceed from one position or another. Infinite draughts, we shall prove, exhibits a robust transfinite game value phenomenon---our main result is that every countable ordinal arises as the game value of a position in infinite draughts, and this result is optimal for games having countably many options at each move. In short, the omega one of infinite draughts is true omega one.\goodbreak

Let us begin by getting clear on the rules of the game and the nature of infinite play. Infinite draughts has no standard starting configuration. Instead, we analyze how the game might proceed from any given board position. In the position of figure \ref{Figure.A-position-in-infinite-draughts}, for example, the black king can undertake iterated captures via ABCDE, but after this Red will recapture by jumping to~F for the win.

To be precise, a player captures an enemy piece by jumping over it, if the space just beyond is empty; captured pieces are  removed immediately from the board. Jumping can occur iteratively, with a single piece performing  multiple jumps in succession, such as with the move ABCDE we just discussed, and iterated jumps count altogether as a single turn.

There are two types of pieces, pawns and kings, with the kings indicated in our diagrams with a double circular boundary. The difference is that pawns are allowed forward movement only, with black pawns proceeding on upward diagonals and red pawns downward, while kings can move freely upward or downward. All play takes place on the dark squares only. In finite draughts, a pawn is promoted to king upon reaching the opposite side of the board, but in infinite draughts, regrettably, since there are no boundary edges to the infinite board, such promotions do not occur. Nevertheless, we shall freely consider positions in infinite draughts involving both pawns and kings. The full ABCDE iterated jumping move in the previous figure, for example, would have been impossible if the black piece had been merely a pawn, since the jump from C to D was regressive.

We shall play with the \emph{forced-jump} rule, which means that if a player can jump an enemy piece, then it is obligatory to make such a jumping move. If there is a choice of jumping moves, however, the player may freely choose which jump to make. We play furthermore with the \emph{forced-iterated-jump} rule, which means that a player must continue to make jumps in iteration, when this is possible, performing a maximal such jump. That is, if after making a jumping move or an iteration of jumping moves, yet another jump remains possible with that same piece, then it is obligatory to make such an additional jump, until a situation is realized in which no additional jumps are possible with that piece. Such an iterated jump sequence counts as a single turn for the player undertaking it.

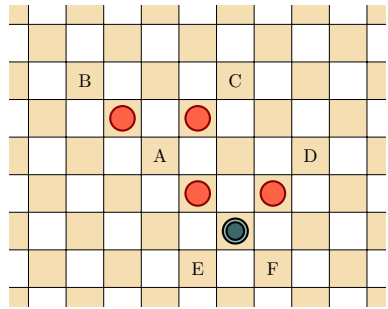
\begin{wrapfigure}{r}{.43\textwidth}\hfill
\begin{tikzpicture}[scale=.5] 
 \def\n{10} 
 \def\m{8} 
 \clip (0,0) rectangle (\n,\m);
 \begin{scope}[shift={(-.5,-.5)}] 
  \begin{pgfonlayer}{boardshades} 
   \clip (.5,.5) rectangle (\n+.5,\m+.5);
   \foreach \i in {0,...,\n} {
    \foreach \j in {0,...,\m} {
      \pgfmathsetmacro\c{100*mod(\i+\j,2)};
      \fill [white!\c!Wheat] (\i,\j) rectangle (\i+1,\j+1);
      }
     }
   \draw[very thin] (0,0) grid (\n+1,\m+1);
  \end{pgfonlayer}
 \end{scope}
 \draw (6,2) node[BB] {};
 \foreach \p/\q in {5/3,7/3,3/5,5/5}
   {\draw (\p,\q) node[rr] {};}  
 \draw[every node/.style={scale=.7}]
     (4,4) node {A}
     (2,6) node {B}
     (6,6) node {C}
     (8,4) node {D}
     (5,1) node {E}
     (7,1) node {F};
 \begin{pgfonlayer}{boardarrows}
   \begin{scope}[Blue,->,shorten >=-4pt]
   \end{scope}
 \end{pgfonlayer}
\end{tikzpicture}
\captionsetup{style=rightside}
\caption{A choice of jump iterations}
\label{Figure.A-choice-of-interations}
\end{wrapfigure}
When a branching choice of jumps for a piece is possible, a player may legally follow any of the branching paths, even if doing so would ultimately result in a smaller total number of jumps. In the position here, for example, with both the forced-jump and forced-iterated-jump rules, then Black has three options: the iterated jump AB, the iterated jump AC, or the single jump to D. With only the forced jump rule but not the forced-iterated-jump rule, then Black also has the option of the single jump to A. With no forced-jump rules at all, however, Black would also have the two additional options of a simple move to E or to F.

\newpage
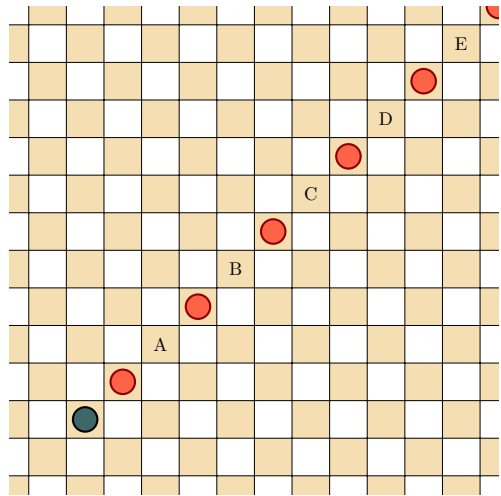
\begin{wrapfigure}{r}{.53\textwidth}\hfill
\begin{tikzpicture}[scale=.5] 
 \def\n{13} 
 \def\m{13} 
 \clip (0,0) rectangle (\n,\m);
 \begin{scope}[shift={(-.5,-.5)}] 
  \begin{pgfonlayer}{boardshades} 
   \clip (.5,.5) rectangle (\n+.5,\m+.5);
   \foreach \i in {0,...,\n} {
    \foreach \j in {0,...,\m} {
      \pgfmathsetmacro\c{100*mod(\i+\j,2)};
      \fill [white!\c!Wheat] (\i,\j) rectangle (\i+1,\j+1);
      }
     }
   \draw[very thin] (0,0) grid (\n+1,\m+1);
  \end{pgfonlayer}
 \end{scope}
 \draw (2,2) node[bb] {};
 \foreach \p/\q in {3/3,5/5,7/7,9/9,11/11,13/13}
   {\draw (\p,\q) node[rr] {};}  
 \draw[every node/.style={scale=.7}]
     (4,4) node {A}
     (6,6) node {B}
     (8,8) node {C}
     (10,10) node {D}
     (12,12) node {E};
 \begin{pgfonlayer}{boardarrows}
   \begin{scope}[Blue,->,shorten >=-4pt]
   \end{scope}
 \end{pgfonlayer}
\end{tikzpicture}
\captionsetup{style=rightside}
\caption{An infinite iterated jumping opportunity}
\label{Figure.Infinite-iterated-jump}
\end{wrapfigure}
The possibility of iterated jumps in infinite draughts leads to a surprising possibility and conundrum not arising in the finite game. Namely, what about infinite iterated jumps? In the position here, for example, the black pawn can begin capturing the red pawns on the main diagonal, jumping once to A, then to B, and so on via CDE. If we suppose the pattern continues forever up the diagonal, then it seems that Black could undertake an infinitely iterated jump without end. Every single red piece on the diagonal will be thereby captured on this move. Fine---all the red pieces will be captured. The conundrum concerns the black piece, namely, where is it at the end of the move? The \emph{infinite-jump} rule of infinite draughts answers this question by stating that when a piece undertakes an infinite iterated jump, then the active piece itself also is removed from the board after the jumps take place. It is as though the black piece jumps off to infinity, thereby not only capturing all those red pieces, but also removing itself from the board. While jumping, the black piece will have long abandoned any particular square, and so it is not to be found on any of them.

This rule has the intriguing consequence that a player can lose a piece not only by being captured, during their opponent's turn, but also by jumping to infinity on their own turn, a form of self-capture. In particular, for the position here, after the black iterated jumping move, the board will be completely empty of pieces. Since it would be Red's turn, this would mean a loss for Red. But if there were another red piece off the diagonal, then it would be a loss for Black, since Red would be able to move and then it would be Black's turn, but he has no pieces.

Let us clarify the winning condition more exactly. A player loses a game of infinite draughts when it is their turn, but they have no legal move available. This situation might arise simply because all their pieces have been captured and they have no piece left to move; but it can also happen when they do have pieces, if those pieces should happen to be all boxed in by the position with no legal move available. When a player has lost, the other player wins. Meanwhile, infinitely long play always counts as a draw, and so in infinite draughts there is no need for the 40-move or 50-move rules sometimes adopted in tournaments---those rules, in our view, should be seen ultimately as a proxy for draw by infinite play. Notice that every win of infinite draughts occurs at a finite stage of play, making it what is called an open game, which we shall elaborate upon in the next section.

\section{A brief review of transfinite game values}

Let us briefly review the theory of transfinite ordinal game values. An introduction to game values in the context of infinite chess is provided in~\cite{EvansHamkins2014:TransfiniteGameValuesInInfiniteChess, EvansHamkinsPerlmutter:APositionInInfiniteChessWithGameValueOmega^4}, but the idea is applicable to many games, and our main aim in this article is to exhibit high game values in infinite draughts. The theory of ordinal game values can be used to prove the Gale-Stewart theorem on open determinacy~\cite{GaleStewart1953:InfiniteGamesWithPerfectInformation}, and game values are also closely related to the theory of Sprague-Grundy nimbers.

\begin{wrapfigure}[12]{r}{.35\textwidth}\vskip-1.5ex\hfill
\begin{tikzpicture}[scale=.5] 
 \def\n{8} 
 \def\m{8} 
 \clip (0,0) rectangle (\n,\m);
 \begin{scope}[shift={(-.5,-.5)}] 
  \begin{pgfonlayer}{boardshades} 
   \clip (.5,.5) rectangle (\n+.5,\m+.5);
   \foreach \i in {0,...,\n} {
    \foreach \j in {0,...,\m} {
      \pgfmathsetmacro\c{100*mod(\i+\j,2)};
      \fill [white!\c!Wheat] (\i,\j) rectangle (\i+1,\j+1);
      }
     }
   \draw[very thin] (0,0) grid (\n+1,\m+1);
  \end{pgfonlayer}
 \end{scope}
 \draw (6,2) node[bb] {};
 \foreach \p/\q in {4/4,3/5,2/6}
   {\draw (\p,\q) node[rr] {};}
 \draw[every node/.style={scale=.7}]
      (5,3) node {A};
\end{tikzpicture}
\captionsetup{style=rightside}
\caption{Game value 2}
\label{Figure.Game-value-2}
\end{wrapfigure}
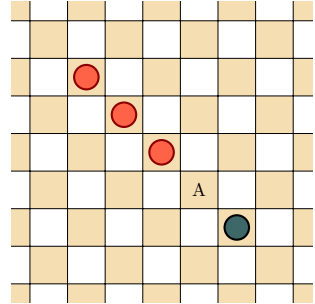
Game values generalize to the transfinite the concept of a mate-in-$n$ position in chess, occurring when a player can force a winning checkmate in $n$ moves but not fewer---such a position has game value $n$. To illustrate the idea in draughts, consider the simple position shown here in figure \ref{Figure.Game-value-2}, with Red to play. The game value is $2$ for Red, since Red can advance the leading red pawn down to A as a kind of bait, obligating Black under the forced-jump rule to capture it, after which Red is in a position immediately to recapture and thereby win. So Red can force a win in two moves, and not fewer, and this is precisely what it means to have a game value of~$2$ for that player.

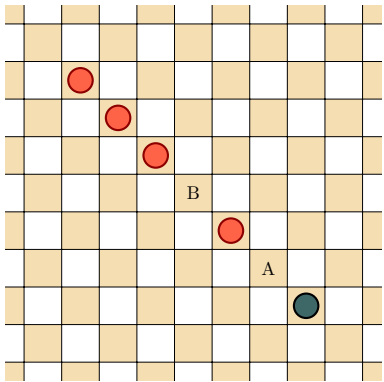
\begin{wrapfigure}[15]{l}{.42\textwidth}
\begin{tikzpicture}[scale=.5] 
 \def\n{10} 
 \def\m{10} 
 \clip (0,0) rectangle (\n,\m);
 \begin{scope}[shift={(-.5,-.5)}] 
  \begin{pgfonlayer}{boardshades} 
   \clip (.5,.5) rectangle (\n+.5,\m+.5);
   \foreach \i in {0,...,\n} {
    \foreach \j in {0,...,\m} {
      \pgfmathsetmacro\c{100*mod(\i+\j,2)};
      \fill [white!\c!Wheat] (\i,\j) rectangle (\i+1,\j+1);
      }
     }
   \draw[very thin] (0,0) grid (\n+1,\m+1);
  \end{pgfonlayer}
 \end{scope}
 \draw (8,2) node[bb] {};
 \foreach \p/\q in {6/4,4/6,3/7,2/8}
   {\draw (\p,\q) node[rr] {};}
 \draw[every node/.style={scale=.7}]
      (7,3) node {A}
      (5,5) node {B};
\end{tikzpicture}\hfill
\captionsetup{style=leftside}
\caption{Game value 3}
\label{Figure.Game-value-3}
\end{wrapfigure}
The position in figure \ref{Figure.Game-value-3}, in contrast, has game value $3$ for Red, with Red to play. Namely, Red can advance the isolated pawn to A, obligating Black to jump, and this in effect reduces to the previous position, since afterward Red can advance a pawn to B as more bait, obligating Black again to jump, after which Red recaptures for the win. Red can therefore force a win in $3$ moves, and not fewer, and so this position has game value $3$ for Red. We invite the reader to carry the pattern further and create positions in infinite draughts exhibiting game value $n$ for any desired finite number $n$.

Things become truly fascinating with infinite game values. The position shown in figure~\ref{Figure.Value-omega}, we claim, illustrates a game value of $\omega$. Let us explain.
\begin{figure}\centering
\begin{tikzpicture}[scale=.45] 
 \def\n{30} 
 \def\m{44} 
 \clip (2,4) rectangle (30,44);
 \useasboundingbox (2,4) rectangle (30,44);
 \begin{scope}[shift={(-.5,-.5)}] 
  \begin{pgfonlayer}{boardshades} 
   \clip (2.5,4.5) rectangle (\n+.5,\m+.5);
   \foreach \i in {0,...,\n} {
    \foreach \j in {0,...,\m} {
      \pgfmathsetmacro\c{100*mod(\i+\j,2)};
      \fill [white!\c!Wheat] (\i,\j) rectangle (\i+1,\j+1);
      }
     }
  \end{pgfonlayer}
  \begin{pgfonlayer}{boardgrid} 
    \clip (2.5,4.5) rectangle (\n+.5,\m+.5);
    \draw[thin] (0,0) grid (\n+1,\m+1);
   \end{pgfonlayer}
 \end{scope}
 \draw (6,6) node[BB] {};
 \foreach \p/\q in {
  7/7,9/9,11/11,13/13,15/15,17/17,19/19,21/21,23/23,25/25,27/27,29/29,
  7/9,5/11,4/12,
  11/13,9/15,6/18,5/19,4/20,
   17/19,15/21,13/23,10/26,8/28,7/29,6/30,
   25/27,23/29,21/31,19/33,16/36,14/38,12/40,11/41,10/42,
   }
   {\draw (\p,\q) node[rr] {};}  
 \draw (6,10) node[scale=.8] (1) {$1$};
 \draw (8,16) node[scale=.8] (2) {$2$};
 \draw (12,24) node[scale=.8] (3) {$3$};
 \draw (18,34) node[scale=.8] (4) {$4$};
 \begin{scope}[transparency group,opacity=.5]
 \end{scope}
 \begin{pgfonlayer}{boardshades}
  \clip (0,0) rectangle (\n,\m);
   \foreach \p/\q in {8/8,10/10,12/12,14/14,16/16,18/18,20/20,22/22,24/24,26/26,28/28,30/30
   }
     {\draw (\p,\q) node[Square,fill=CadetBlue!60!Blue!40] {}; }
   \foreach \p/\q in {
   6/10,
   10/14,8/16,
   16/20,14/22,12/24,
   24/28,22/30,20/32,18/34
   }
     {\draw (\p,\q) node[Square,fill=Orchid!80!Blue!40] {}; }
 \end{pgfonlayer}
 \begin{pgfonlayer}{boardarrows}
  \draw[CadetBlue!60!Navy,very thick,->] (6,6) -- (\n+2,\n+2);
  \begin{scope}[Orchid!50!Navy,very thick,->,shorten >=25pt]
     \draw (8,8) -- (1.center);
     \draw (12,12) -- (2);
     \draw (18,18) -- (3);
     \draw (26,26) -- (4);
  \end{scope}
 \end{pgfonlayer}
\end{tikzpicture}
\caption{A position in infinite draughts with game value $\omega$}\label{Figure.Value-omega}
\end{figure}
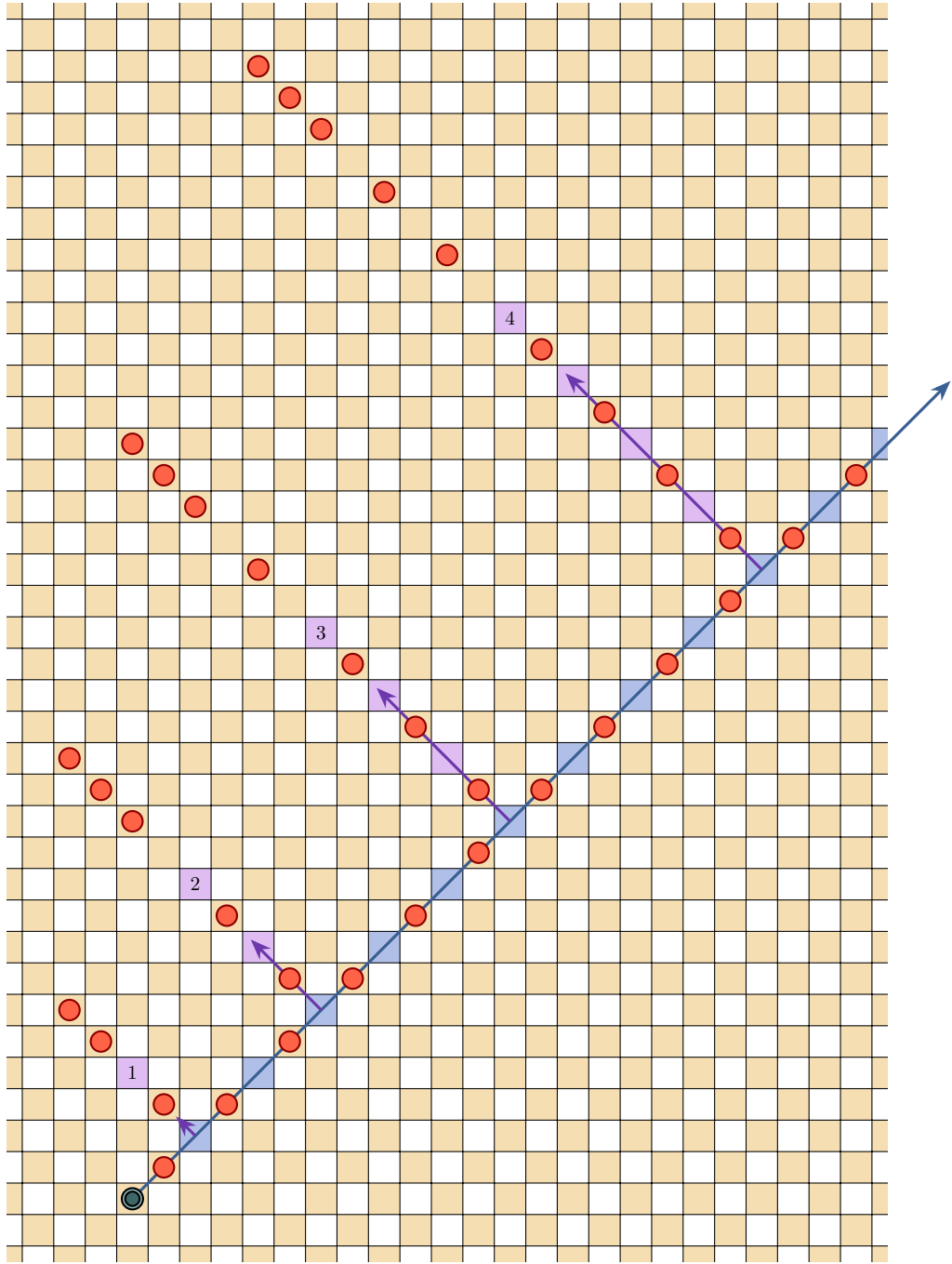
It is Black to play, and since a jumping move is available, it is therefore obligatory for Black to jump. Indeed, a long iteration of jumping moves is possible by proceeding straight up the blue-gray highlighted diagonal, what we shall call the main ladder. We assume that the pattern of the position continues indefinitely, and so this is an infinite iterated jumping opportunity. But if Black should undertake all those infinitely iterated jumps up the main ladder, however, then Black will have jumped away to infinity, and because of the infinite-jump rule, therefore, the black piece would disappear, while leaving some red pieces surviving, which would mean a loss for Black on his next move.

Black can avoid this immediate loss by opting instead to exit the main ladder onto one of the various branching offshoots, indicated in violet, coming to rest either at square $1$, or $2$, or $3$, and so on. We intend that the pattern of the position continues indefinitely, and that Black can opt to turn off at any point and come to rest at square $n$, for any desired finite number $n$.

If Black should come to rest at square $1$, then the game value of the resulting position would be exactly $1$, for Red could immediately recapture, causing Red to win after exactly one additional move. Alternatively, if Black should come to rest at square $2$, then locally this position would be just the same as the position we had considered at the outset of this section, with game value $2$. Namely, Red can advance down, obligating a forced-jump reply, after which Red  recaptures for the win. So if Black opts to settle upon square $2$, then Red forces a win in exactly two additional moves, making the resulting game value $2$. Similarly, if Black should instead come to rest instead at square position $3$, then Red can force a win in three additional moves, giving that position game value $3$. And more generally, for every number $n$, if Black should opt on the first move to come to rest at square $n$, then Red will have a sequence of $n$ additional moves, each with a forced-jump reply, leading to an inevitable Red win. Notice that the finite columns branching off the main ladder are spaced increasingly far apart, and so there is no possibility that Red could gain a quicker win than by following the line of play we have described.

We may summarize the interesting situation of this position as follows. It is Black's turn, and he must either undertake the infinite iterated jump, causing an immediate loss, or else settle upon square $n$ for some finite $n$, after which Red can win in exactly $n$ moves, and not fewer. Thus, Red has a winning strategy that will always win in finitely many moves, but Black controls how long it will take. Namely, Red will win, but by the choice of his first move, Black can postpone the inevitable defeat by any desired finite amount. This combination of features is precisely what it means for a game to exhibit game value $\omega$, the first infinite ordinal.

With these examples now under our belt, let us give the precise general definition of transfinite ordinal game value. First, we define that an infinite two-player game of perfect information is \emph{open} for a player, if all possible ways for that player to win are known already as definite wins at a finite stage of play. (The terminology arises from the fact that this condition is equivalent to the set of winning plays for that player forming an open set in the product topology in the space of all infinite plays.) Infinite chess, for example, is an open game for both players, because checkmate, when it occurs, does so at a finite stage of play. Infinite draughts also is an open game, because one wins the game only by placing the opponent into a situation with no legal move on their turn, and this occurs at a finite stage of play if it does at all---all infinitely long play counts as a draw.

In any open game, we now define the ordinal game values of positions in that game. Game values are defined relative to the perspective of one player, whose winning condition is open, and we shall refer to this player as the open player and the other player as the closed player. The game may happen to be open for both players, as in infinite chess and infinite draughts.

\begin{definition}
In any open game, we define the ordinal \emph{game value} of positions in the game by the following transfinite recursion:
\begin{enumerate}
    \item If in a position the game is already won for the open player, then the game value of the position is $0$.
    \item If the game is not yet won and it is the open player's turn, and he can move to a position with value $\alpha$, then for the smallest such ordinal $\alpha$, the value of the position is $\alpha+1$.
    \item If it is the closed player's turn, then the value of the position is the supremum of the values of the positions to which a legal move can be made, if all such positions have a value, otherwise the value is not yet defined.
\end{enumerate}
\end{definition}
Thus, the notion ``position $p$ has game value $\alpha$'' is defined by transfinite ordinal recursion on $\alpha$. Some positions may never happen to have a value assigned to them, and this is fine---these are the unvalued positions for the open player.

The fundamental observation of game values is that if a game position has a value, then the open player has a winning strategy for play from that position, namely, the value-reducing strategy, according to which the open player should play always so as to reduce the game value. If a position has a value, then the open player can indeed reduce the value by one (unless she has already won), and the closed player can never move from a valued position to an unvalued position or to a position with higher value. Thus, the value-reducing strategy leads to a steadily decreasing sequence of ordinal values, which must eventually reach $0$, meaning that the open player has won. So the value-reducing strategy is a winning strategy for the open player from any position having a value.

If in contrast a game position does not have a value, then the closed player has a strategy that will prevent an open-player win, the \emph{value-avoiding} strategy, by which the closed player aims to remain always on unvalued positions. The open player can never move from an unvalued position to a valued position, for if this would be possible then the position would have had a value after all, and if a position is unvalued and it is the closed player's turn, then there must be a move to an unvalued position, for otherwise again the position would have had a value. The value-avoiding strategy therefore ensures that the game play will remain on unvalued positions, and in particular, it ensures that the open player does not win, because for the open player to win requires realizing value $0$ at a finite stage of play. In a game without draws, the value-avoiding strategy is therefore a winning strategy for the closed player from any unvalued game position.

The analysis we have just given in the two preceding paragraphs amounts to a proof of the Gale-Stewart theorem on open determinacy---every open game without draws is determined---since if the initial position has a value, then the open player can win by the value-reducing strategy, and if it doesn't, then the closed player can win by the value-avoiding strategy.

\section{High game values arise in infinite draughts}

We now come to the main contribution of this article, our proof that infinite draughts exhibits the most robust possible spectrum of game values.

\begin{theorem}\label{MainTheorem}
Every countable ordinal arises as the game value of a position in infinite draughts.
\end{theorem}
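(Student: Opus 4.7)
The plan is to proceed by transfinite induction on countable ordinals $\alpha < \omega_1$, producing in each case an infinite draughts position $P_\alpha$, with Black to move, whose game value for Red is exactly $\alpha$. The examples of the preceding section---value $0$ (an already-won position for Red), finite values (Figures \ref{Figure.Game-value-2} and \ref{Figure.Game-value-3}), and value $\omega$ (Figure \ref{Figure.Value-omega})---already suggest the two combinatorial operations that will do all the work: a \emph{successor gadget} that increases the value by one, and a \emph{branching-ladder gadget} that achieves a countable supremum. Since every countable ordinal is reachable from $0$ by iterating the successor operation and countable-sup operation, these two operations together suffice.

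For the successor step, given $P_\beta$ I would construct $P_{\beta+1}$ by placing, in front of the region of $P_\beta$, a red pawn of the bait-and-recapture type exhibited in Figures \ref{Figure.Game-value-2} and \ref{Figure.Game-value-3}: Black's forced-jump response (indeed his only legal move, once the surrounding squares have been arranged to leave no alternatives) captures the bait, after which Red's forced recapture produces exactly the configuration $P_\beta$ again with Black to move. For the limit step, given a strictly increasing sequence $(\alpha_n)_{n<\omega}$ with supremum $\lambda$ and corresponding positions $P_{\alpha_n}$, I would build $P_\lambda$ by reproducing the architecture of Figure \ref{Figure.Value-omega}: a long diagonal of red pawns presenting an infinite iterated-jump opportunity to a Black king, with the $n$th lateral branch leading the king off the main diagonal to a landing square inside a region containing an isomorphic copy of $P_{\alpha_n}$, possibly prefixed by a fixed finite chain of successor gadgets so that the extra deterministic moves spent traversing the branch become part of the approximating value. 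The infinite-jump rule prevents Black from jumping up the main ladder forever without self-capture, so his only non-losing first moves exit the ladder at some rung $n$, at which point the game reduces locally to $P_{\alpha_n}$; the resulting value is then $\sup_n \alpha_n = \lambda$.

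The main obstacle I expect is \emph{non-interference} between the infinitely many subpositions sharing a single board. Each $P_{\alpha_n}$ is itself an infinite configuration, and its pieces must not be able to affect, or be affected by, the pieces of another copy $P_{\alpha_m}$ nor by the ladder infrastructure, within the time horizon of Red's intended winning strategy; otherwise Red might find an unintended short win, or Black an escape, collapsing the analysis of the value. To handle this I would strengthen the inductive hypothesis, producing along with $P_\alpha$ an explicit bound $r_\alpha \colon \omega \to \omega$ on the spatial spread of the active pieces over $k$ moves of play from $P_\alpha$, and requiring that the support of $P_\alpha$ lie in an explicit confined region of the plane. When assembling $P_\lambda$ I would then space the branch exits along the main ladder, and place the copies of $P_{\alpha_n}$, far enough apart that the $r_{\alpha_n}$ bounds preclude any cross-interaction; and I would design the landing squares of each branch so that no forced jump pulls the Black king back out of the intended subposition under the forced-iterated-jump rule. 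Once these spatial-containment invariants are shown to be preserved by both the successor and limit constructions, the value of each composite position may be read off directly from the recursive definition of game value, giving $P_\alpha$ of value exactly $\alpha$ for every $\alpha < \omega_1$.
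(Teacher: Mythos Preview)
Your high-level plan---transfinite induction with a branching-ladder gadget for suprema---matches the paper's, but there is a genuine gap precisely where the forced-iterated-jump rule bites, and your non-interference analysis is aimed at the wrong threat. In the Figure~\ref{Figure.Value-omega} architecture, the landing squares $1,2,3,\ldots$ are endpoints of Black's iterated jump: no further jump is available there, which is why Black may stop. If you now attach a copy of $P_{\alpha_n}$ at square~$n$, Black must somehow enter it. If you connect it by a jump, then square~$n$ is no longer a legal stopping point and Black is dragged straight through into $P_{\alpha_n}$ (and recursively into its subtrees) on his very first turn, collapsing the whole tree into a single move. If instead you leave square~$n$ jump-free so Black can stop, Red gets a move before Black steps into $P_{\alpha_n}$, and Red can spend it moving one of the red pieces already inside $P_{\alpha_n}$, corrupting its value. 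Your spatial bounds $r_\alpha$ prevent cross-contamination between distant copies, which is not the problem; the problem is Red sabotaging the one copy Black has just committed to. Your remark about designing landing squares ``so that no forced jump pulls the Black king back out'' looks at the wrong direction and does not address this.

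The paper's fix is a specific \emph{unit branching configuration}: the iterated jump off the main ladder terminates at a genuine resting square~$A$; from~$A$ Black makes a non-jump move to one of two squares, each of which opens onto two further exit ladders---four exit lines in all, each leading to a copy of the target subtree. Red gets two interfering moves during this transit (one after~$A$, one after square~$1$ or~$2$), and each can corrupt at most one exit line; Black's intermediate choice of $1$ versus $2$ dodges the first corruption and his choice of exit line dodges the second, so he always reaches a pristine copy. This branching multiplicity, not spatial separation, is the mechanism you are missing. (A smaller point: your successor gadget is also off. The bait-and-recapture of Figures~\ref{Figure.Game-value-2}--\ref{Figure.Game-value-3} ends with Red capturing Black's only piece, so it cannot ``produce exactly the configuration $P_\beta$ again''. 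The paper handles successors with the same ladder construction, taking every $\alpha_n=\alpha$.)
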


The main idea of the proof will be to embed certain well-founded trees into positions of infinite draughts, in such a manner that the way play unfolds in the draughts game corresponds to Black climbing in the tree, with Black losing when a terminal node is reached. This was the same strategy employed by the first author in~\cite[theorem~10]{EvansHamkins2014:TransfiniteGameValuesInInfiniteChess} to prove that every countable ordinal arises as the game value of a position in infinite 3D chess. The point is that every countable ordinal arises as the ordinal rank of a well-founded tree $T$ on the natural numbers, that is a tree of finite sequences of natural numbers with no infinite branch, and furthermore the game value of such a climbing-through-$T$ game is exactly the ordinal rank of the tree $T$; it follows that the ordinal game values of such infinite draughts positions are at least as great as the ranks of the corresponding trees, and we can conclude that every countable ordinal arises as the game value of a position in infinite draughts.\goodbreak

\newpage

\begin{wrapfigure}{r}{.6\textwidth}\hfill
\begin{tikzpicture}[scale=.5] 
 \def\n{14} size of board n= number of cols
 \def\m{16} 
 \clip (0,0) rectangle (\n,\m);
 \begin{scope}[shift={(-.5,-.5)}] 
  \begin{pgfonlayer}{boardshades} 
   \clip (.5,.5) rectangle (\n+.5,\m+.5);
   \foreach \i in {0,...,\n} {
    \foreach \j in {0,...,\m} {
      \pgfmathsetmacro\c{100*mod(\i+\j,2)};
      \fill [white!\c!Wheat] (\i,\j) rectangle (\i+1,\j+1);
      }
     }
   \draw[very thin] (0,0) grid (\n+1,\m+1);
  \end{pgfonlayer}
  \begin{pgfonlayer}{boardgrid} 
    \clip (.5,.5) rectangle (\n+.5,\m+.5);
    \draw[thin] (0,0) grid (\n+1,\m+1);
  \end{pgfonlayer}
 \end{scope}
 \draw (7,1) node[BB] {};
 \foreach \p/\q in {6/2,6/4,8/6,10/8,12/10,14/12,16/14,
 8/8,6/10,4/12,2/14,0/16,
 6/12,8/14,10/16
 }
   {\draw (\p,\q) node[rr] {};}
 \begin{pgfonlayer}{boardshades}
  \clip (0,0) rectangle (\n,\m);
   \foreach \p/\q in {5/3,7/5,9/7,11/9,13/11,15/13,
     7/9,5/11,3/13,1/15,
     7/13,9/15
     }
     {\draw (\p,\q) node[Square,fill=CadetBlue!60!Blue!40] {}; }
  \end{pgfonlayer}
  \begin{pgfonlayer}{boardarrows}
    \begin{scope}[CadetBlue!50!Navy,very thick,->,shorten >=-6pt]
     \draw (7,1) -- (5,3) -- (13,11);
     \draw (9,7) -- (1,15);
     \draw (5,11) -- (9,15);
    \end{scope}
  \end{pgfonlayer}
\end{tikzpicture}
\captionsetup{style=rightside}
\caption{The infinite binary branching tree can\\ be represented in infinite draughts}
\label{Figure.Binary-tree}
\end{wrapfigure}
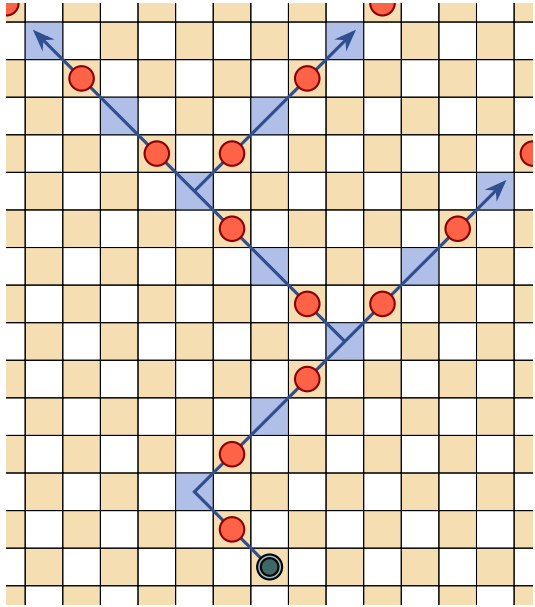
Let us begin by observing that we may easily embed the infinite binary branching tree into draughts, in the sense that there is a infinite draughts position, such as the one shown here, in which a Black king faces a choice of iterated jumping paths having the same branching structure as the full binary branching tree---every node in the tree leads eventually to a further branching node. We may furthermore arrange that the branches of this tree become increasingly separated from one another on the board, so that as you climb the tree there are increasingly long stretches with no other parts of the tree nearby. To achieve this, simply allow the current branches to continue growing apart for a good length of time before branching again, and in this way the branches of the tree become increasingly far apart.  There is truly plenty of room in the infinite plane. Observe also that because we can embed the full binary tree, we can also embed any subtree of it. We can also embed these trees just as easily in a downward-oriented direction.

To achieve the high game values, however, we shall want to embed not binary branching trees, but rather infinitely branching trees, more specifically, well-founded trees on the natural numbers. For this, we use the idea that is present already in figure~\ref{Figure.Value-omega}. Namely, the Black king in that position is in effect faced with an infinitely branching choice: he can choose to climb the main ladder to infinity, thereby dying immediately and losing the game, or he can come to rest at one of the squares $n$ for any desired $n$. In our argument here, we shall arrange that those squares $n$ lead not just to a position with value $n$, but to a further part of the tree with a desired game value $\alpha_n$. By doing so, we shall thereby create a position with game value exceeding all those values $\alpha_n$.

In this way, we shall thereby prove by transfinite induction that every countable ordinal arises as the game value of a position in infinite draughts in the form of a tree-like position with a single Black king, faced with a forced-jump move. We have already exhibited such positions with all finite values, as well as value $\omega$. If there are such positions with value $\alpha_n$ for every $n$, then we can make a position with value $\sup\set{\alpha_n+1\mid n\in\omega}$ by creating a main ladder as in figure~\ref{Figure.Value-omega}, and having square $n$ be a node where the subsequent tree has value $\alpha_n$.\goodbreak

Let us get into the details. It turns out that our main proof strategy is a little easier to implement for the version of infinite draughts without the iterated forced-jump rule. And so before proving theorem \ref{MainTheorem}, let us first prove the following theorem to illustrate the proof method.

\begin{theorem}\label{Theorem.No-forced-iteration}
 Every countable ordinal arises as the game value of a position in the version of infinite draughts with the forced-jump rule, but without the forced-iterated-jump rule.
\end{theorem}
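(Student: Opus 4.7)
The plan is to proceed by transfinite induction on $\alpha<\omega_1$, constructing for each countable ordinal $\alpha$ a position $P_\alpha$ with Black to play having game value exactly $\alpha$ under the forced-jump rule (without forced iteration). Each $P_\alpha$ will be tree-like: a single Black king (the \emph{climber}) sits among red pieces in such a way that every Black legal move is a forced jump, and the unfolding play corresponds to climbing a well-founded tree $T$ on the natural numbers of rank $\alpha$. Terminal nodes of $T$ correspond to the climber becoming boxed in with no legal move, which is a loss for Black.

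For the base case $\alpha=0$, I box the Black king in by placing red pieces on all four diagonal neighbours together with red blockers on every jump-target square beyond, so that the climber has neither a simple diagonal move nor a jump available; the game is already won for Red with value $0$. The finite values and $\alpha = \omega$ are directly exemplified by slight adaptations of the positions already exhibited in Figures~\ref{Figure.Game-value-2}, \ref{Figure.Game-value-3}, and \ref{Figure.Value-omega}.

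For the inductive step, suppose $P_{\alpha_n}$ has been built with value $\alpha_n$ for every $n<\omega$; I construct a position of value $\sup_n(\alpha_n+1)$. Place the Black king at a designated start square and surround it with red pieces so that Black faces countably many distinct forced-jump branches, the $n$-th of which (after a bounded number of alternating forced moves) arrives at a translated copy of $P_{\alpha_n}$. Since a king at one square has only four immediate diagonal jump directions, the $\omega$-many branching is realized by cascading: Black's first forced jump selects among a small number of adjacent directions, each landing position again offers several forced-jump options after Red's uniquely forced reply, and so on down a finite-depth tree of forced choices whose $\omega$ many terminal paths serve as the entry points to the attached subtrees. The subtree copies are translated arbitrarily far apart in the infinite plane so that distinct branches do not interfere geometrically. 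A routine value computation using that the value at Black's turn is the supremum over reachable positions, and at Red's turn is the minimum-reachable value plus one, then yields the composite value $\sup_n(\alpha_n+1)$; iterating this construction from the base cases reaches every countable ordinal.

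The main obstacle is to engineer Red's interstitial replies so that they are genuinely uniquely best, and in particular so that Red has no alternative reply that would reduce the game value below the intended target or introduce an unvalued line. This is handled most cleanly by arranging that Red, too, faces a uniquely available jumping move at each interstitial step, compelled by the forced-jump rule applied symmetrically to Red. Note that without the forced-iteration rule we do not encounter the hazard of Black stalling by an infinite ladder-climb, because Black cannot repeatedly return to a configuration of the same value: each Black forced jump corresponds to a strict descent in the well-founded tree $T$, so play necessarily terminates in finitely many moves. The abundant room in the infinite plane allows subtrees to be separated as widely as needed, so the delicate local engineering of each branching gadget can be carried out independently, completing the induction.
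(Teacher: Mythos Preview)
Your proposal has a genuine gap at the heart of the inductive step. You write that the $\omega$-branching is realized by ``cascading: Black's first forced jump selects among a small number of adjacent directions, each landing position again offers several forced-jump options after Red's uniquely forced reply, and so on down a finite-depth tree of forced choices whose $\omega$ many terminal paths serve as the entry points to the attached subtrees.'' But a finite-depth tree with finite branching at every node has only finitely many terminal paths, not $\omega$ many. So this mechanism cannot produce the infinitary branching you need. Moreover, even if you allowed the selection tree to have unbounded depth, each additional level costs Black a move and gives Red a free reply; you have not explained why those accumulating Red moves cannot corrupt the target subtree before Black reaches it, nor why the resulting value is still $\sup_n(\alpha_n+1)$ rather than something smaller.

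The paper's proof exploits precisely the feature of this rule set that you set aside. Without the forced-iterated-jump rule, Black may legally halt an iterated jump at any intermediate square. The construction places Black at the top of a single infinite ladder with infinitely many offshoots, so that \emph{in one turn} Black jumps as far as he likes and stops at square~$n$; this is how genuine $\omega$-branching is obtained within a single Black move. The offshoot subtrees are arranged with nondecreasing values $\alpha_1\leq\alpha_2\leq\cdots$, so that if Red's single intervening move corrupts the $n$th subtree, Black can simply descend further on the main ladder to a later, still-pristine offshoot of equal or greater value. Your remark that ``without the forced-iteration rule we do not encounter the hazard of Black stalling by an infinite ladder-climb'' gets the role of the rule backwards: the infinite ladder is not a hazard to be avoided but the very device that makes the construction work, and the absence of forced iteration is what lets Black stop on it wherever he pleases. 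Finally, your plan to force Red's interstitial replies via the forced-jump rule is asserted but not constructed; the paper instead allows Red an essentially free move and shows it can spoil at most one branch, which is the cleaner way to control the value.
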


\smallskip\noindent\emph{Proof.} 
To clarify, in this version of the game, when a player is faced with a jumping-move possibility, then it is obligatory to make a jumping move, and one is free to continue with iterated jumping, including infinitely iterated jumping, if possible, but there is no obligation to continue jumping after the initial jump on a turn and one is free to stop at any point along the iterated jump.

\begin{wrapfigure}{r}{.42\textwidth}\vskip-2ex\hfill
\begin{tikzpicture}[scale=.5] 
 \def\n{10} size of board n= number of cols
 \def\m{10} 
 \useasboundingbox (0,-.5) rectangle (\n,\m);
 \clip (0,0) rectangle (\n,\m);
 \begin{scope}[shift={(-.5,-.5)}] 
  \begin{pgfonlayer}{boardshades} 
   \clip (.5,.5) rectangle (\n+.5,\m+.5);
   \foreach \i in {-1,...,\n} {
    \foreach \j in {-1,...,\m} {
      \pgfmathsetmacro\c{100*abs(mod(\i+\j,2))};
      \fill [white!\c!Wheat] (\i,\j) rectangle (\i+1,\j+1);
      }
     }
  \end{pgfonlayer}
  \begin{pgfonlayer}{boardgrid} 
    \clip (.5,.5) rectangle (\n+.5,\m+.5);
    \draw[thin] (0,0) grid (\n+1,\m+1);
  \end{pgfonlayer}
 \end{scope}
 \draw (2,8) node[BB] (BB) {};
 \foreach \p/\q in {3/7,5/5,7/3,9/1,7/7}
   {\draw (\p,\q) node[RR] {};}
 \begin{pgfonlayer}{boardshades}
  \clip (0,0) rectangle (\n,\m);
   \foreach \p/\q in {4/6,6/4,8/2,10/0}
     {\draw (\p,\q) node[Square,fill=CadetBlue!60!Blue!40] {}; }
  \end{pgfonlayer}
  \begin{pgfonlayer}{boardarrows}
    \draw[CadetBlue!50!Navy,very thick,->] (BB) -- ++(8.5,-8.5);
  \end{pgfonlayer}
\end{tikzpicture}
\captionsetup{style=rightside}
\caption{This infinite descending\\ ladder has game value 1}
\label{Figure.Infinite-descending-ladder}
\end{wrapfigure}
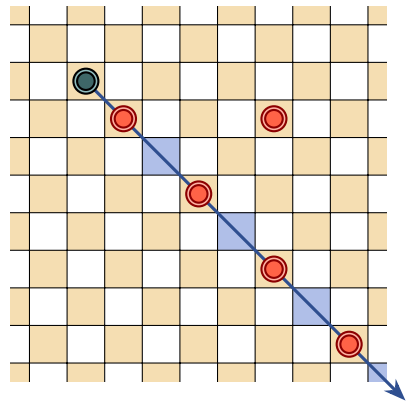
To be sure, it can often be dangerous for a draughts player to halt in the middle of an iterated jump, since one might be faced with immediate recapture. In the position here, for example, Black has an infinite iterated jump opportunity, straight down the blue ladder, but it would be unsafe for him to stop at any point along the way, since he would be facing a red king who could immediately capture him. So this position has game value $1$ for Red, since either Black will jump infinitely and then disappear, or else Black will stop at some point, only to be immediately captured by Red. So Red can ensure that in any case, Black is without a move on his next turn. Nevertheless, in some positions it can be safe to stop in the middle of an iterated jump. If some of those red kings had been pawns, for example, then Black could have safely stopped just above them, since the red pawns move only downward; and if all the red pieces had been pawns, one can see that the position would be a draw without the forced-iterated-jump rule. Without the extra red piece, the position would be a win for Black via the infinite iterated-jump, since this would leave Red without any move, even though Black also would have no pieces.

Let us prove the theorem. Suppose that we have  tree-like positions in which Black sits at the apex of a tree, and optimal play will call for Black to descend on paths of the tree, with game values of $\alpha_1$, $\alpha_2$, $\alpha_3$, and so on for Red, with Black to move first. (We have already illustrated how to achieve game value $1$, and game value $0$ is a position where Red has already won.) Let us assume that this sequence is nondecreasing, so that $\alpha_1\leq\alpha_2\leq\alpha_3$ and so on. We shall now construct a position, depicted in figure \ref{Figure.Infinite-branching-ladder}, with value strictly exceeding these. Black is situated at the top of the main ladder, descending to the right in blue, which has offshoot branching nodes at square $1$, square $2$, and so on, with each square $n$ leading to our tree positions with game value $\alpha_n$. All the red pieces are kings, except for the two pawns below each branching node, making those squares at least momentarily safe for Black to stop an iterated jump. When $\alpha_n>1$, then the colored subtree we have attached at square $n$ will itself have such branching nodes.

\begin{figure}[h]\centering
\begin{tikzpicture}[scale=.5] 
 \def\n{18} size of board n= number of cols
 \def\m{18} 
 \useasboundingbox (-1,-1) rectangle (\n+1,\m);
 \clip (0,0) rectangle (\n,\m);
 \begin{scope}[shift={(-.5,-.5)}] 
  \begin{pgfonlayer}{boardshades} 
   \clip (.5,.5) rectangle (\n+.5,\m+.5);
   \foreach \i in {-1,...,\n} {
    \foreach \j in {-1,...,\m} {
      \pgfmathsetmacro\c{100*abs(mod(\i+\j,2))};
      \fill [white!\c!Wheat] (\i,\j) rectangle (\i+1,\j+1);
      }
     }
  \end{pgfonlayer}
  \begin{pgfonlayer}{boardgrid} 
    \clip (.5,.5) rectangle (\n+.5,\m+.5);
    \draw[thin] (0,0) grid (\n+1,\m+1);
  \end{pgfonlayer}
 \end{scope}
 \draw (2,16) node[BB] (BB) {};
 \foreach \p/\q in {3/15,7/11,11/7,13/5,17/1,
  1/11,
  5/7,3/5,1/3,
  11/1
 }
   {\draw (\p,\q) node[RR] {};}
 \foreach \p/\q in {5/13,9/9,15/3,
  3/13,
  7/9,
    13/3
 }
   {\draw (\p,\q) node[rr] {};}
 \begin{pgfonlayer}{boardshades}
  \clip (0,0) rectangle (\n,\m);
   \foreach \p/\q in {4/14,6/12,8/10,10/8,12/6,14/4,16/2,18/0}
     {\draw (\p,\q) node[Square,fill=CadetBlue!60!Blue!40] {}; }
   \foreach \p/\q in {2/12,0/10}
     {\draw (\p,\q) node[Square,fill=Orchid!80!Blue!40] {}; }
   \foreach \p/\q in {6/8,4/6,2/4,0/2}
     {\draw (\p,\q) node[Square,fill=SpringGreen!70] {}; }
   \foreach \p/\q in {12/2,10/0}
     {\draw (\p,\q) node[Square,fill=Yellow!50!Orange!70] {}; }
  \end{pgfonlayer}
  \begin{pgfonlayer}{boardarrows}
  \draw[CadetBlue!50!Navy] (4,14) node[Square,draw=CadetBlue!50!Navy,very thick] (1) {} node[black,scale=.8] {1}
        (8,10) node[Square,draw,very thick] (2) {} node[black,scale=.8] {2}
        (14,4) node[Square,draw,very thick] (3) {} node[black,scale=.8] {3};
    \draw[CadetBlue!50!Navy,very thick,->] (BB) -- (1) -- (2) -- (3) -- ++(5,-5);
    \draw[Orchid!50!CadetBlue!50!Navy,very thick,->] (1) -- ++(-5,-5) node[below] {$\alpha_1$};
    \draw[DarkGreen,very thick,->] (2) -- ++(-9,-9) node[below] {$\alpha_2$};
    \draw[RawSienna,very thick,->] (3) -- ++(-5,-5) node[left] {$\alpha_3$};
  \end{pgfonlayer}
\end{tikzpicture}
\caption{The descending blue ladder has branching\\ offshoots with game values $\alpha_1$, $\alpha_2$, $\alpha_3$, and so on}
\label{Figure.Infinite-branching-ladder}
\end{figure}
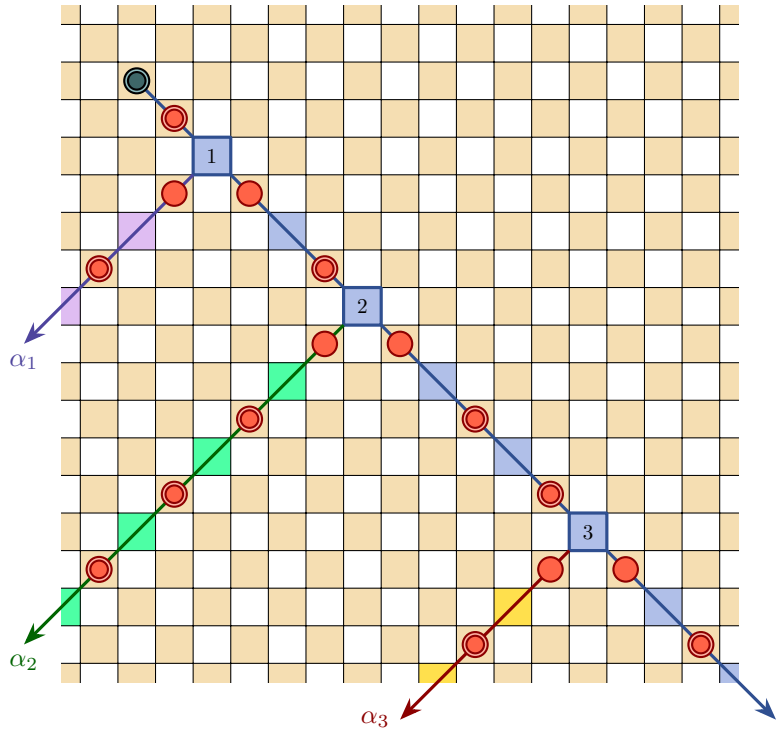

The branching squares on the main ladder can be spaced increasingly far apart, as desired, so as to make room for the branching of the prior trees to spread out. Since the entire binary branching tree fits into the plane, we can definitely fit copies of our desired branching trees. For the purpose of the game value, the only thing that matters about our position is the tree structure of the branches.

We claim, first, that this position overall is a winning position for Red under the forced-jump rule. Since Black is faced initially with a jumping opportunity, he is obliged to make a jump, and so he will begin to descend the main ladder. If he makes an infinite iterated jump, however, his only piece will disappear and he will lose immediately; and so he will instead want to stop at a safe square, such as one of the branching nodes on the main ladder. If he should stop at square $n$ on the main ladder, then Red can advance the main ladder pawn directly below and to the right of him, which would obligate Black to enter the offshoot tree. This would be exactly like making a first move in that tree, which had a game value of $\alpha_n$ for Red, when Black moves first, and so Red can ultimately force a win once he does so. If alternatively Black should on his initial move have already entered one of the offshoot subtrees, then his move amounts to having made a first move in that subtree, which again Red can win, since it has game value $\alpha_n$ for Red. So in any case, regardless of Black's initial move, Red can force a win.

A closer look at this analysis shows that the game value of the position overall is precisely $\sup_n(\alpha_n+1)$. By induction, for Black to enter the tree below square $n$ has game value $\alpha_n$ for Red. On his first move, he can position himself on square $n$, and if Red does not somehow corrupt that particular offshoot tree, then he can enter it on his next move, preserving at least game value $\alpha_n$ by doing so. If Red corrupts that particular offshoot tree, however, then Black can simply choose to descend on the main ladder to a further branching node realizing at least that same game value of $\alpha_n$---this was precisely why we had wanted the values to be nondecreasing. Red cannot afford always to allow Black to descend in this way on the main ladder, however, since this would mean infinite play and hence a draw, whereas Red aims to win. So eventually, Red will allow Black to enter a pristine offshoot subtree with value at least $\alpha_n$. (And furthermore, to allow Black to descend again on the main ladder is always simply a wasted move for Red, since it does not reduce the game value.) Therefore, the value of the position after Black's first move was at least $\alpha_n+1$, since we must count Red's move just before Black enters the subtree, and so the overall position has value $\sup_n(\alpha_n+1)$, as desired.

In summary, the main optimal line of play is that Black will occupy some square $n$. This will have value $\alpha_n+1$, because Red will move to block further descent on the main ladder by advancing the main ladder pawn below Black, and this forces Black to enter the subtree, which has value $\alpha_n$. It follows, we claim, that every countable ordinal is realized by such a tree-like position. As we said, values $0$ and $1$ are realized, and if value $\alpha$ is realized, then let every $\alpha_n=\alpha$ and construct the position of figure \ref{Figure.Infinite-branching-ladder} to realize value $\alpha+1=\sup_n(\alpha_n+1)$. If every ordinal below a countable limit ordinal $\lambda$ is realized, then choose $\alpha_n$ increasing and cofinal in $\lambda$,
so that we get a position with value $\sup_n(\alpha_n+1)=\lambda$, as desired.
\QEDbox\medskip\goodbreak 

An essentially similar idea can be used also to prove the following theorem.

\begin{theorem}\label{Theorem.No-forced-jump}
Every countable ordinal arises as the game value of a position in the version of infinite draughts with neither the forced-jump nor forced-iterated-jump rules.
\end{theorem}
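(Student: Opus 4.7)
The plan is to adapt the tree-embedding construction of Theorem~\ref{Theorem.No-forced-iteration} by enclosing the black king at each of its decision points in a wall of red pieces that blocks every non-jumping move, thereby simulating the forced-jump rule locally even when that rule is no longer globally in effect. Because infinite draughts is played on an unbounded board with arbitrarily many pieces available, there is ample room for such walls without disturbing the underlying tree structure.

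Specifically, at the apex square where the black king starts, and at every safe stopping square along the main ladder and within each subtree, we place additional red pieces on the diagonal neighbors of Black that were previously empty, arranging the square immediately beyond each wall piece to be occupied so that Black cannot jump over the wall. The sole exceptions are the intended jumping directions---the one direction that continues the current ladder at a non-branching square, and the two directions at a branching node (continue on the main ladder, or enter the corresponding subtree)---whose landing squares are left empty so Black can still jump there. To prevent Red from dismantling the walls by moving a wall piece away, we choose the wall pieces to be red pawns frozen in place (both of their forward diagonals occupied), nesting additional frozen pawns around each wall piece as needed. Since each decision point involves only finitely many pieces and branching nodes can be placed arbitrarily far apart, the wall configurations can be made locally self-contained and mutually non-interfering. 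Once this is done, the game-value analysis of Theorem~\ref{Theorem.No-forced-iteration} carries over verbatim: Black's options at each decision point are exactly those prescribed by the tree structure, Red's strategy of advancing a ladder pawn to compel Black into a subtree remains available (possibly with one or two extra preparatory moves to preserve wall integrity), and the transfinite induction on countable ordinals produces positions of every desired value.

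The hard part will be the geometric bookkeeping. When Black captures a wall piece in the course of a prescribed jump, that square becomes empty, which could in principle open up either a non-jumping escape or an unintended jumping path on a later move; similarly, any preparatory move Red makes to force Black into a subtree must not itself vacate a critical wall square. One must therefore verify that the walls at every future decision point remain intact throughout the intended line of play, which is handled by layering enough frozen pawns so that no essential blocking square is ever simultaneously vacated by the play, and by placing the branching nodes and subtrees sufficiently far apart on the plane. This verification is careful but routine, and once completed it yields the theorem by exactly the same transfinite induction as before.
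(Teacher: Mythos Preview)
Your approach is genuinely different from the paper's, and it contains a gap that is not as routine to close as you suggest. The paper does not wall Black in; instead it places a small number of red \emph{king guardians} on squares adjacent to the resting points, so that any non-jumping move by Black leads to immediate capture. Black \emph{can} stray, but doing so loses at once, and this is enough to make the game-value analysis of Theorem~\ref{Theorem.No-forced-iteration} go through unchanged.

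The difficulty with your frozen-wall idea is the square that Black has just jumped over. When the black king jumps from $X$ over a ladder piece at $Y$ to land at the resting square $Z$, the square $Y$ is now empty, and since there is no forced-jump rule, Black is free on his next turn simply to step back to $Y$ as a non-jumping move. No amount of static frozen red pawns can prevent this: $Y$ was occupied by a ladder piece (which had to be jumpable, hence not walled off), and that piece is now gone. From $Y$ Black can step to $X$ (also empty, since Black was just there), and so on back along his entire jump path. This gives Black an oscillating line of play that lasts forever, which is a draw---so the position is not a win for Red at all, and the game-value computation collapses. Your proposed fix of ``layering enough frozen pawns'' cannot address this, because the vacated square is a ladder piece in the jump path, not a wall piece; and truly frozen pawns (both forward diagonals blocked) cannot capture Black when he wanders into the gap, since capturing is itself a move. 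To repair this you would need some mobile red piece positioned to punish the backward step---but that is precisely a guardian, which is what the paper uses.

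A secondary issue: freezing a red pawn requires its two forward (downward) neighbours to be occupied, and freezing those requires their forward neighbours occupied, and so on; there is no finite cycle available since pawns move only one way. So your claim that ``each decision point involves only finitely many pieces'' is false as stated. This is not fatal on an infinite board, but it does mean the wall structures are infinite and you must argue more carefully that they do not collide with one another or with the tree.
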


\begin{wrapfigure}{r}{.58\textwidth}\hfill
\begin{tikzpicture}[scale=.5] 
 \def\n{12} size of board n= number of cols
 \def\m{18} 
 \useasboundingbox (-1,5) rectangle (\n,\m);
 \clip (-2,6) rectangle (\n,\m);
 \begin{scope}[shift={(-.5,-.5)}] 
  \begin{pgfonlayer}{boardshades} 
   \clip (-1.5,6.5) rectangle (\n+.5,\m+.5);
   \foreach \i in {-2,...,\n} {
    \foreach \j in {-1,...,\m} {
      \pgfmathsetmacro\c{100*abs(mod(\i+\j,2))};
      \fill [white!\c!Wheat] (\i,\j) rectangle (\i+1,\j+1);
      }
     }
  \end{pgfonlayer}
  \begin{pgfonlayer}{boardgrid} 
    \clip (-1.5,6.5) rectangle (\n+.5,\m+.5);
    \draw[thin] (-2,0) grid (\n+1,\m+1);
  \end{pgfonlayer}
 \end{scope}
 \draw (2,16) node[BB] (BB) {};
 \foreach \p/\q in {3/15,7/11,11/7,13/5,17/1,
  1/11,-1/9,
  5/7,3/5,1/3,
  11/1,
  4/16,0/16,8/12
 }
   {\draw (\p,\q) node[RR] {};}
 \foreach \p/\q in {5/13,9/9,15/3,
  3/13,
  7/9,
    13/3
 }
   {\draw (\p,\q) node[rr] {};}
 \begin{pgfonlayer}{boardshades}
   \foreach \p/\q in {4/14,6/12,8/10,10/8,12/6,14/4,16/2,18/0}
     {\draw (\p,\q) node[Square,fill=CadetBlue!60!Blue!40] {}; }
   \foreach \p/\q in {2/12,0/10,-2/8}
     {\draw (\p,\q) node[Square,fill=Orchid!80!Blue!40] {}; }
   \foreach \p/\q in {6/8,4/6,2/4,0/2}
     {\draw (\p,\q) node[Square,fill=SpringGreen!70] {}; }
   \foreach \p/\q in {12/2,10/0}
     {\draw (\p,\q) node[Square,fill=Yellow!50!Orange!70] {}; }
   \foreach \p/\q in {  4/16,0/16,8/12}
     {\draw (\p,\q) node[Square,fill=Yellow] {}; }
  \end{pgfonlayer}
  \begin{pgfonlayer}{boardarrows}
  \draw[CadetBlue!50!Navy] (4,14) node[Square,draw=CadetBlue!50!Navy,very thick] (1) {} node[black,scale=.8] {1}
        (8,10) node[Square,draw,very thick] (2) {} node[black,scale=.8] {2};
    \draw[CadetBlue!50!Navy,very thick,->] (BB) -- (1) -- (2) -- ++(5,-5);
    \draw[Orchid!50!CadetBlue!50!Navy,very thick,->] (1) -- ++(-6.7,-6.7) ;
    \draw[DarkGreen,very thick,->] (2) -- ++(-5,-5);
  \end{pgfonlayer}
\end{tikzpicture}
\captionsetup{style=rightside,font=footnotesize}
\caption{Red guardians (on the yellow squares)\\ can ensure
that Black stays on the tree}
\label{Figure.Tree-with-guardians}
\end{wrapfigure}
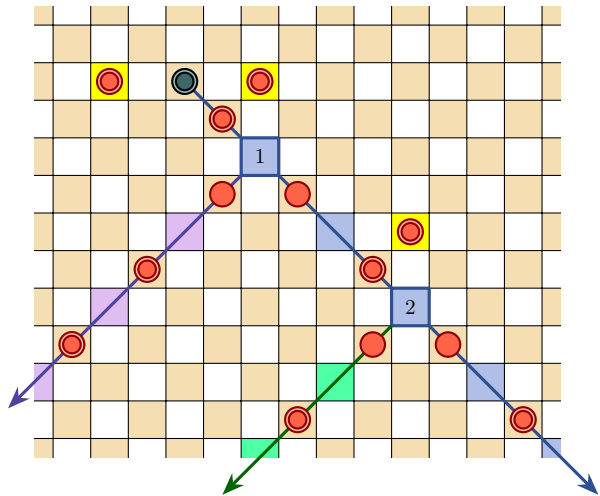
\smallskip\noindent\emph{Proof.} 
To prove this theorem we simply place some red guardians at strategic locations to keep Black on track. Here we have modified the previous position by placing two red guardian kings on the yellow highlighted squares near the black king, sufficient to encourage him to descend on the ladder even without the obligation of the forced-jump rule. Black would face immediate capture by the guardians should he choose not to descend the ladder, and so we don't need the forced-jump rule to convince him to do so. Similarly, we place a red guardian above each of the safe-haven branching nodes, above every square $n$ as indicated on the yellow hightlighted squares above squares $1$ and $2$, to ensure that after his brief rest, Black will indeed choose to continue his descent, either on the main ladder, if this is possible, or else on the offshoot subtree, rather than simply wandering off upward. With these changes, the situation is that Red can force Black to descend the tree (or die immediately), and so Red has a winning strategy in this position. But further, Black can choose to descend the tree as he likes, stopping at any desired branching square $n$, just as before, descending into the subtree only when it remains pristine and free from any corrupting Red movements. Thus, Black can play the position just as before, and so once again, every countable ordinal arises as the game value of a position in infinite draughts without the forced-jump rules.
\QEDbox\medskip\goodbreak 

We should like now to prove the main result, theorem \ref{MainTheorem}, establishing that every countable ordinal arises in infinite draughts, with the full rule set including the forced-jump and forced-iterated-jump rules.

\begin{proof}[Proof of theorem~\ref{MainTheorem}]
The main strategy of the proof will be the same as in theorems \ref{Theorem.No-forced-iteration} and \ref{Theorem.No-forced-jump}, but we shall have to work a little harder, in light of the forced-iterated-jump rule, in order to provide a genuinely safe resting place for Black. The position will therefore involve a somewhat more complicated branching unit configuration,

The main issue with the previous arguments is that the resting places used at the main branching nodes in the positions of the proofs of theorems \ref{Theorem.No-forced-iteration} and \ref{Theorem.No-forced-jump} are not suitable stopping points for Black when the forced-iterated-jump rule is in effect, since that rule would require him to continue jumping right past those squares without stopping. What is needed instead is a branching configuration with a resting square that is truly a place of respite, an end of the iterated jumping sequence that brought Black to that juncture.

We shall consider specifically a tree of paths connected by the unit branching configuration shown in figure \ref{Figure.Unit-branching-configuration}.
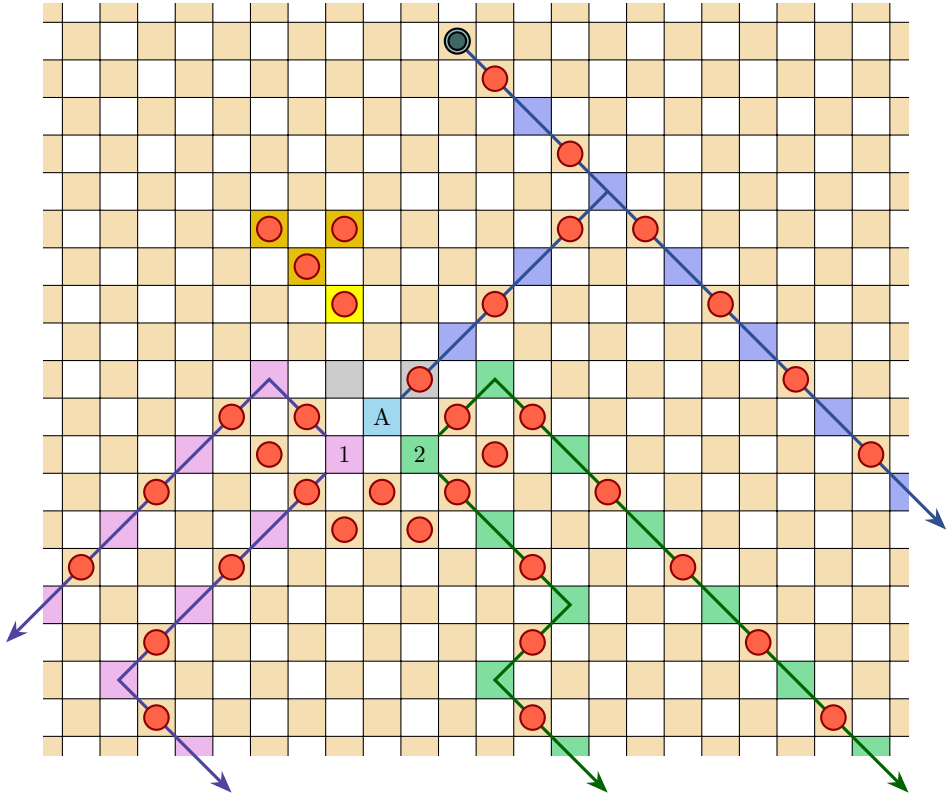
\begin{figure}[h]\centering
\begin{tikzpicture}[scale=.5] 
 \def\n{24} size of board n= number of cols
 \def\m{31} 
 \useasboundingbox (0,10) rectangle (\n+1,\m);
 \clip (1,11) rectangle (\n,\m);
 \begin{scope}[shift={(-.5,-.5)}] 
  \begin{pgfonlayer}{boardshades} 
   \clip (1.5,11.5) rectangle (\n+.5,\m+.5);
   \foreach \i in {-1,...,\n} {
    \foreach \j in {-1,...,\m} {
      \pgfmathsetmacro\c{100*abs(mod(\i+\j,2))};
      \fill [white!\c!Wheat] (\i,\j) rectangle (\i+1,\j+1);
      }
     }
  \end{pgfonlayer}
  \begin{pgfonlayer}{boardgrid} 
    \clip (1.5,11.5) rectangle (\n+.5,\m+.5);
    \draw[thin] (0,0) grid (\n+1,\m+1);
  \end{pgfonlayer}
 \end{scope}
 \draw (12,30) node[BB] (BB) {};
 \foreach \p/\q in {10/18,9/17,11/17,
  8/18,6/16,4/14,4/12,
  12/18,14/16,14/14,14/12,
  8/20,6/20,4/18,2/16,
  12/20,14/20,16/18,18/16,20/14,22/12,
  11/21,13/23,15/25,
  13/29,15/27,17/25,19/23,21/21,23/19,25/17,27/15,
  7/19,13/19,
  9/23,8/24,9/25,7/25
 }
   {\draw (\p,\q) node[rr] {};}
 \begin{pgfonlayer}{boardshades}
  \clip (0,0) rectangle (\n,\m);
   \foreach \p/\q in {12/22,14/24,14/28,16/26,18/24,20/22,22/20,24/18}
     {\draw (\p,\q) node[Square,fill=CadetBlue!30!Blue!40] {}; }
   \foreach \p/\q in {13/17,15/15,13/13,15/11,
     13/21,15/19,17/17,19/15,21/13,23/11}
     {\draw (\p,\q) node[Square,fill=SpringGreen!50!Green!50] {}; }
   \foreach \p/\q in {7/17,5/15,3/13,5/11,
      7/21,5/19,3/17,1/15}
     {\draw (\p,\q) node[Square,fill=Orchid!50] {}; }
   \foreach \p/\q in {9/21,11/21}
     {\draw (\p,\q) node[Square,fill=Gray!40] {}; }
   \foreach \p/\q in {8/24,9/25,7/25}
     {\draw (\p,\q) node[Square,fill=Yellow!70!Brown] {}; }
  \draw (9,23) node[Square,fill=Yellow] {};
   \draw (10,20) node[Square,fill=SkyBlue!80] (A) {} node[scale=.9] {A}
         (9,19) node[Square,fill=Orchid!50] (1) {} node[scale=.9] {1}
         (11,19) node[Square,fill=SpringGreen!50!Green!50] (2) {} node[scale=.9] {2};
  \end{pgfonlayer}
  \begin{pgfonlayer}{boardarrows}
    \draw[CadetBlue!50!Navy,very thick,->] (A) -- ++(6,6) (BB) -- ++(13,-13);
    \draw[Orchid!50!CadetBlue!50!Navy,very thick,->] (1) -- ++(-2,2) -- ++(-7,-7);
    \draw[Orchid!50!CadetBlue!50!Navy,very thick,->] (1) -- ++(-6,-6) -- ++(3,-3);
    \draw[DarkGreen,very thick,->] (2) -- ++(2,2) -- ++(11,-11);
    \draw[DarkGreen,very thick,->] (2) -- ++(4,-4) -- ++(-2,-2) -- ++(3,-3);
  \end{pgfonlayer}
\end{tikzpicture}
\caption{The unit branching configuration}
\label{Figure.Unit-branching-configuration}
\end{figure}
The pattern of play is that Black will have been proceeding down on the main ladder, in blue, which will have perhaps many of these unit branching configurations appearing along it with various game values. In order to avoid loss by infinite iterated jump, Black will turn off the main ladder into one of these unit configurations, whichever one he prefers, and come to a rest at square A, an end to the iterated-jumping line of this turn. The red pieces that he had jumped in arriving at square A will have of course been removed from the board. From square A, Black aims to follow one of the violet or green exit lines leading out of the configuration, and indeed, he aims to follow one of these exit lines which remains pristine, in the sense that it and the entire subtree to which it leads to has not yet experienced any corrupting Red moves. We shall argue that indeed it is possible for him to do so. It might help to imagine that after departing the part of the position shown here, these four exit lines begin to separate from one another and then branch apart at great distance before encountering the next unit branching configuration upon them.

The default normal play we have in mind is that after Black arrives at square A, Red will make a move (perhaps within this configuration or perhaps somewhere else completely), and then Black will proceed either to square 1 or square 2, so as to aim at an exit path that remains uncorrupted by the immediately preceding Red move. From either of these squares, he is able to choose between two exit lines leading out of the configuration. If on her next move Red should corrupt one of those two exit lines, then Black will be able to leave the configuration on the other. This exit line then serves as the new main ladder leading to the next unit configuration.

Notice that Black will want to move from square A either to square 1 or square 2, because either of the moves upward to the gray highlighted squares places him in a position to be quickly captured, as Red will advance the leading red guardian highlighted in yellow and obligate a forced-jump reply after which Black is captured. Because of this, Red can ensure that Black does in fact move either to square 1 or square 2 (unless the exit lines already have a trivial game value). And if Red has made no corrupting moves nearby, then Black is indeed obligated to exit on one of the four colored exit lines.

It follows that the position overall will be a win for Red, since the connectivity of the unit branching configurations is such that the exit lines from the $n$th branching unit each lead to a tree-like position with game value $\alpha_n$, as in theorems \ref{Theorem.No-forced-iteration} and \ref{Theorem.No-forced-jump}, and so Red will have a winning strategy once Black exits the unit.

The question now is whether Red can achieve a quicker win, that is, with a lower game value than forcing Black to descend the tree as we described. We shall argue that this is not the case, in fact Black can succeed in his plan to exit the configuration on a pristine exit line and thereby descend through the abstract tree represented by the connectivity of these unit configurations. The reason is that when Black arrives at square A, we may assume that all four exit lines and the rest of this unit configuration remain pristine, since Black would have chosen this particular offshoot from the main ladder only in the case that this was true. After Black has arrived at square A, there is no immediate move by Red with a forced-jump reply. There are four exit lines proceeding away from squares 1 and 2, and no single Red move can disrupt more than one of these exit lines. Thus, regardless of Red's move, Black will be able to move either to square 1 or to square 2, whichever has both of its exit lines remaining pristine. And then Red makes another move, which again disrupts at most one of those two exit lines, and thus Black is able to depart on the other one. Thus, from the initial position, Black will be able to choose an acceptable unit branching configuration on the main ladder and subsequently exit on a pristine exit line.

We may now complete the proof of the theorem by induction. Suppose that there are such tree-like positions having game value $\alpha_n$ for every natural number $n$. We may assemble a main ladder having infinitely many branching unit configurations, with all four exit lines of the $n$th such unit leading to such positions of value~$\alpha_n$. The resulting position, we claim, with Black ready to start descending that ladder, will have game value at least $\sup_n(\alpha_n+2)$. First of all, the position is a definite win for Red, since Red can choose always to never interfere with the default normal play, and so Black will either jump to infinity and lose immediately, or else come to rest at some square A for one of the accessible configurations. From there, Black will not move to a gray square, since this causes a quick loss, and so he will proceed either to square 1 or 2, after which he is on the ladder of a position with game value $\alpha_n$ for some $n$, a win for Red. So Red can win our overall position, and the only question is the game value. We have argued, however, that for any $n$ Black can situate himself after one supplemental move on one of the exit lines of configuration with value $\alpha_n$. So our position has value at least $\alpha_n+2$, making the overall value at least $\sup_n(\alpha_n+2)$. From this, it follows as in the proof of theorem \ref{Theorem.No-forced-iteration} that the game values realized in positions of infinite draughts are unbounded in the countable ordinals. And since it is easy to prove by induction that the game values that are realized are closed downward---if a game position has value $\beta$ and $\alpha<\beta$, then there is a position reachable in finitely many steps having value $\alpha$. So every countable ordinal is realized as the game value of a position in infinite draughts.
\end{proof}

Since every position in infinite draughts admits a choice of at most countably many moves, it follows by induction that the  game values realized in infinite draughts will be countable ordinals. And so another way of summarizing what we have proved is that every position in infinite draughts is either a draw for both players, or else a win for one player or the other with some countable ordinal game value, with every countable ordinal being realized this way. In particular, the \emph{omega one} of infinite draughts, the supremum of the game values realized by positions in this game, will be true $\omega_1$:

$$\omega_1^{\text{\tiny draughts}}=\omega_1.$$

Let us conclude the article with a brief consideration of the role of computable strategies in infinite draughts. We say that a position in infinite draughts is \emph{computable} if there is a computable procedure to identify for each square whether it is occupied or not and if so by what kind of piece. So there is a computable procedure to produce an image of any desired finite part of the position.\goodbreak

\begin{theorem}
There is a computable position in infinite draughts, such that Red has a computable strategy that wins against any computable Black strategy, and forces a draw or better against any Black strategy. Meanwhile, Black has a (noncomputable) drawing strategy.
\end{theorem}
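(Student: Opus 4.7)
The plan is to specialize the tree-embedding construction of Theorem~\ref{MainTheorem} to a carefully chosen computable tree. By a classical theorem in computability theory, there exists a computable subtree $T\subseteq\omega^{\ltomega}$ which is ill-founded---it has an infinite branch---yet has no computable infinite branch. (A standard example is the tree of finite attempts at a consistent completion of Peano arithmetic.) I would embed such a $T$ into infinite draughts by the branching-unit construction of Theorem~\ref{MainTheorem}, laying out the unit configurations at uniformly computable board coordinates and letting the branches separate at an explicit computable rate so as to avoid collisions. The resulting position $P$ is then computable in the sense defined just before the theorem.

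Red's computable strategy is the default forcing strategy already implicit in the proof of Theorem~\ref{MainTheorem}: on each turn, Red advances the specific forcing pawn associated to whichever unit configuration Black has most recently reached, and makes a harmless waiting move with a distant piece if Black is still mid-ladder. Since identifying the relevant configuration and its forcing pawn requires only inspecting a bounded neighborhood of Black's current piece on the computable board $P$, this strategy is uniformly computable. Black's drawing strategy, by contrast, consists in traversing an infinite branch $f\in[T]$: at the $k$th unit configuration reached along the way, Black exits the main ladder at index $f(k)$ onto a pristine line leading into the subtree rooted at $f\restrict(k+1)$. This yields infinite play, hence a draw, and no computable strategy can implement it because $[T]$ contains no computable branch.

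With these strategies in hand, the three assertions follow. Against any Black strategy whatever, Red's forcing moves ensure that every play either remains in the tree forever, giving infinite play and hence a draw, or else reaches a position in which Black has no legal move on his turn, giving Red a win---exactly as in the unit-configuration analysis in the proof of Theorem~\ref{MainTheorem}. In particular, Red never loses. Against any computable Black strategy, the joint play produced against Red's computable response is itself computable, so if it remained in the tree forever it would define a computable infinite branch of $T$, contradicting the choice of $T$; hence any computable Black must eventually deviate from the tree, whereupon Red wins.

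The main obstacle is the bookkeeping required to verify that Red's strategy remains computable in the face of \emph{arbitrary} Black moves, not merely along the intended line of play. This reduces to confirming that the construction of $P$ is fully effective: each unit configuration is a finite template at a computable location, and Red's forcing-pawn reply to any Black move depends only on a bounded neighborhood of Black's current piece, so the reply function is uniformly computable in the game history. Once this effectiveness is in hand, the three conclusions follow by combining the analysis of Theorem~\ref{MainTheorem} with the defining property of $T$.
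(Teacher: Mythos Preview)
Your proposal is correct and follows essentially the same approach as the paper: embed a computable ill-founded tree with no computable infinite branch, let Red's computable strategy be the tree-forcing moves from the earlier constructions, and observe that a computable Black cannot trace an infinite branch while a noncomputable Black can. The only notable difference is that the paper invokes a computable \emph{binary} branching tree (the classical Kleene tree) and the simpler binary embedding of figure~\ref{Figure.Binary-tree}, whereas you use an $\omega$-branching tree and the full unit-configuration machinery of theorem~\ref{MainTheorem}; this is a cosmetic distinction and your more detailed discussion of the effectiveness bookkeeping is, if anything, an improvement on the paper's brief citation of the analogous infinite-chess result.
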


Thus, the game is a draw, but when the players must play according to computable strategies, then Red can force a win.

\begin{proof}
The proof follows the same strategy as \cite[theorem~6]{EvansHamkins2014:TransfiniteGameValuesInInfiniteChess}. Namely, it is a classical result of computability theory that there is an infinite computable binary branching tree $T$ with no computable infinite branch. Using the methods of theorems \ref{MainTheorem}, \ref{Theorem.No-forced-iteration}, and \ref{Theorem.No-forced-jump}, we can create a position in infinite draughts so that play unfolds as though Black is climbing through $T$. Since $T$ admits an infinite branch, there will be a way for Black to climb the tree without ever getting stuck in a terminal node, and this will be a draw in the infinite draughts position by infinite play. But if Black plays according to a computable strategy, then he will not be able to choose at the branching nodes in such a way that will follow an infinite branch, and so inevitably he will find himself stuck at a terminal node, where he will lose. The strategy for Red in either case is to implement the moves that force Black to keep climbing the tree, in the manner discussed in those previous theorems.
\end{proof}

The proof works whether or not we have the forced-jump and forced-iterated-jump rules, and so the theorem applies to all three rule sets.

\printbibliography


\end{document}

--------------------------------------------------------------------

\section*{Graveyard}

This section contains some text I've copied here from the old versions of the arguments, in case we want to revert.

From the old proof of theorem \ref{Theorem.No-forced-iteration}:

\begin{wrapfigure}{r}{.6\textwidth}\hfill
\begin{tikzpicture}[scale=.5] 
 \def\n{13} size of board n= number of cols
 \def\m{17} 
 \clip (-1,-1) rectangle (\n,\m);
 \begin{scope}[shift={(-.5,-.5)}] 
  \begin{pgfonlayer}{boardshades} 
   \clip (-.5,-.5) rectangle (\n+.5,\m+.5);
   \foreach \i in {-1,...,\n} {
    \foreach \j in {-1,...,\m} {
      \pgfmathsetmacro\c{100*abs(mod(\i+\j,2))};
      \fill [white!\c!Wheat] (\i,\j) rectangle (\i+1,\j+1);
      }
     }
  \end{pgfonlayer}
  \begin{pgfonlayer}{boardgrid} 
    \clip (-.5,-.5) rectangle (\n+.5,\m+.5);
    \draw[thin] (-1,-1) grid (\n+1,\m+1);
  \end{pgfonlayer}
 \end{scope}
 \draw (7,1) node[BB] {};
 \foreach \p/\q in {8/2,10/4,10/6,8/8,
 6/8,4/8,2/10,
 8/10,8/12,6/14
 }
   {\draw (\p,\q) node[rr] {};}
 \begin{pgfonlayer}{boardshades}
  \clip (0,0) rectangle (\n,\m);
   \foreach \p/\q in {9/3,11/5,9/7,7/9}
     {\draw (\p,\q) node[Square,fill=CadetBlue!60!Blue!40] {}; }
   \foreach \p/\q in {9/11,7/13,5/15,
    5/7,3/9,1/11
     }
     {\draw (\p,\q) node[Square,fill=Orchid!80!Blue!40] {}; }
  \draw (7,9) node[Square] (A) {} node[scale=.8] {A}
        (5,15) node[Square] (b) {} node {$b$}
        (1,11) node[Square] (a) {} node {$a$};
  \end{pgfonlayer}
  \begin{pgfonlayer}{boardarrows}
    \draw[CadetBlue!50!Navy,very thick] (7,1) -- (11,5) -- (A);
    \draw[Orchid!50!CadetBlue!50!Navy,very thick] (A) -- ++(2,2) -- (b);
    \draw[Orchid!50!CadetBlue!50!Navy,very thick] (A) -- ++(-2,-2) -- (a);
  \end{pgfonlayer}
\end{tikzpicture}
\end{wrapfigure}
It can often be dangerous for a draughts player to halt their move in the middle of an iterated jump, since in many cases one would be faced with immediate recapture. Nevertheless, in some positions, such as the one shown here, one may find a temporary safe haven in the middle of an iterated jump. In this position, Black may undertake an iterated jump from the bottom, climbing up to square A, and although he could continue jumping past this square, it would also be safe to stop there for a brief respite, since he would face no immediate threat, being ensconced safely between the red pieces. On his next turn, Black could then choose either to travel on the branch toward square~$a$ or alternatively on the other branch, toward square~$b$.

This configuration will serve as the unit branching element in more elaborate positions that will establish the theorem. Namely, suppose that we have tree-like positions with game value $\alpha_n$ for every $n$, in the version of draughts without the forced-iterated-jump rule. We may now simply combine them together to produce a position with value $\sup_n(\alpha_n+1)$ as in the position of figure \ref{Figure.Ladder-with-infinite-choices}, where square $n$ leads to two branches, each of which leads to a tree-like position (not shown) where the game value is $\alpha_n$.

\begin{figure}[h]
\centering
\begin{tikzpicture}[scale=.4] 
 \def\n{29} size of board n= number of cols
 \def\m{33} 
 \clip (-1,-1) rectangle (\n,\m);
 \begin{scope}[shift={(-.5,-.5)}] 
  \begin{pgfonlayer}{boardshades} 
   \clip (-.5,-.5) rectangle (\n+.5,\m+.5);
   \foreach \i in {-1,...,\n} {
    \foreach \j in {-1,...,\m} {
      \pgfmathsetmacro\c{100*abs(mod(\i+\j,2))};
      \fill [white!\c!Wheat] (\i,\j) rectangle (\i+1,\j+1);
      }
     }
  \end{pgfonlayer}
  \begin{pgfonlayer}{boardgrid} 
    \clip (-.5,-.5) rectangle (\n+.5,\m+.5);
    \draw[thin] (-1,-1) grid (\n+1,\m+1);
  \end{pgfonlayer}
 \end{scope}
 \draw (7,1) node[BB] {};
 \foreach \p/\q in {8/2,10/4,12/6,14/8,16/10,18/12,20/14,22/16,24/18,26/20,28/22,30/24,
 10/6,8/8,
 6/8,4/8,2/10,
 8/10,8/12,6/14,
 18/14,16/16,
 14/16,12/16,10/18,
 16/18,16/20,14/22,
 26/22,24/24,
 22/24,20/24,18/26,
 24/26,24/28,22/30
  }
   {\draw (\p,\q) node[rr] {};}
 \begin{pgfonlayer}{boardshades}
  \clip (0,0) rectangle (\n,\m);
   \foreach \p/\q in {9/3,11/5,13/7,15/9,17/11,19/13,21/15,23/17,25/19,27/21,29/23,31/25,
   9/7,7/9,
   17/15,15/17,
   25/23,23/25}
     {\draw (\p,\q) node[Square,fill=CadetBlue!60!Blue!40] {}; }
   \foreach \p/\q in {9/11,7/13,5/15,
    5/7,3/9,1/11
     }
     {\draw (\p,\q) node[Square,fill=Orchid!80!Blue!40] {}; }
   \foreach \p/\q in {17/19,15/21,13/23,
    13/15,11/17,9/19
     }
     {\draw (\p,\q) node[Square,fill=SpringGreen!70] {}; }
   \foreach \p/\q in {25/27,23/29,21/31,
    21/23,19/25,17/27
     }
     {\draw (\p,\q) node[Square,fill=Orange!50!Yellow!80] {}; }
    \draw (7,9) node[Square] (1) {} node[scale=.8] {$1$}
        (5,15) node[Square] (b1) {} node[scale=.65] {$b_1$}
        (1,11) node[Square] (a1) {} node[scale=.7] {$a_1$}
        (15,17) node[Square] (2) {} node[scale=.8] {$2$}
        (13,23) node[Square] (b2) {} node[scale=.65] {$b_2$}
        (9,19) node[Square] (a2) {} node[scale=.7] {$a_2$}
        (23,25) node[Square] (3) {} node[scale=.8] {$3$}
        (21,31) node[Square] (b3) {} node[scale=.65] {$b_3$}
        (17,27) node[Square] (a3) {} node[scale=.7] {$a_3$};
  \end{pgfonlayer}
  \begin{pgfonlayer}{boardarrows}
    \draw[CadetBlue!50!Navy,very thick,->,shorten >=-6pt] (7,1) -- (\n+1,\n-5);
    \draw[CadetBlue!50!Navy,very thick] (11,5) -- (1);
    \draw[Orchid!50!CadetBlue!50!Navy,very thick] (1) -- ++(2,2) -- (b1);
    \draw[Orchid!50!CadetBlue!50!Navy,very thick] (1) -- ++(-2,-2) -- (a1);
    \draw[DarkGreen,very thick] (19,13) -- (2);
    \draw[DarkGreen,very thick] (2) -- ++(2,2) -- (b2);
    \draw[DarkGreen,very thick] (2) -- ++(-2,-2) -- (a2);
    \draw[RawSienna,very thick] (27,21) -- (3);
    \draw[RawSienna,very thick] (3) -- ++(2,2) -- (b3);
    \draw[RawSienna,very thick] (3) -- ++(-2,-2) -- (a3);
  \end{pgfonlayer}
\end{tikzpicture}
    \caption{A ladder with infinitely many offshoot choices}
    \label{Figure.Ladder-with-infinite-choices}
\end{figure}
The overall pattern of play is like this. At the outset, Black faces a forced-jump requirement. He can come to rest at any desired square $n$, and then Red will make a move. Perhaps that move might corrupt the position in the tree somehow, which will aim to trap Black by eventually organizing an army of red pieces through those corrupting moves. But the main point is that since there are two branches leading away from square $n$, either toward $a_n$ or $b_n$, at most one of those trees was corrupted in the red move, and so Black can opt to follow the pristine path. Since we may assume that the paths beyond $a_n$ and $b_n$ lead to another infinite branch of choices (unless we are already at a terminal node of the tree), and so the next resting place will be far above any previous corrupting moves by Red.

In this way, the main line of play will be that Black can in effect mimic the behavior of climbing through the tree that is represented by the position. At terminal nodes, Black will be captured and lose. Since the ordinal game values of the climbing-through-$T$ game reach arbitrarily high in the countable ordinals, it follows that we have produced positions in infinite draughts (without the forced-iterated-jump rule) with game values as high in the countable ordinals as desired. Since the game values are furthermore an initial segment of the ordinals, it follows that every countable ordinal is realized as the game value of a position in this version of infinite draughts.
\QEDbox

Old proof of theorem \ref{Theorem.No-forced-jump}:

\begin{wrapfigure}{r}{.5\textwidth}\hfill
\begin{tikzpicture}[scale=.5] 
 \def\n{13} size of board n= number of cols
 \def\m{19} 
 \clip (1,1) rectangle (\n,\m);
 \begin{scope}[shift={(-.5,-.5)}] 
  \begin{pgfonlayer}{boardshades} 
   \clip (1.5,1.5) rectangle (\n+.5,\m+.5);
   \foreach \i in {-1,...,\n} {
    \foreach \j in {-1,...,\m} {
      \pgfmathsetmacro\c{100*abs(mod(\i+\j,2))};
      \fill [white!\c!Wheat] (\i,\j) rectangle (\i+1,\j+1);
      }
     }
  \end{pgfonlayer}
  \begin{pgfonlayer}{boardgrid} 
    \clip (1.5,1.5) rectangle (\n+.5,\m+.5);
    \draw[thin] (-1,-1) grid (\n+1,\m+1);
  \end{pgfonlayer}
 \end{scope}
 \draw (7,5) node[BB] (BB) {};
 \foreach \p/\q in {8/6,10/8,12/10,
 10/10,8/12,
 6/12,4/12,2/14,
 8/14,8/16,6/18
 }
   {\draw (\p,\q) node[rr] {};}
 \foreach \p/\q in {7/3,5/7,
 5/13,7/11
 }
   {\draw (\p,\q) node[RR] {};}
 \begin{pgfonlayer}{boardshades}
  \clip (0,0) rectangle (\n,\m);
   \foreach \p/\q in {9/7,11/9,13/11,9/11,7/13}
     {\draw (\p,\q) node[Square,fill=CadetBlue!60!Blue!40] {}; }
   \foreach \p/\q in {9/15,7/17,5/19,
    5/11,3/13,1/15
     }
     {\draw (\p,\q) node[Square,fill=Orchid!80!Blue!40] {}; }
   \foreach \p/\q in {7/3,5/7}
     {\draw (\p,\q) node[Square,fill=Yellow!60!Orange] {}; }
   \foreach \p/\q in {5/13,7/11}
     {\draw (\p,\q) node[Square,fill=Yellow] {}; }
  \coordinate (A) at (7,13);
  \coordinate (b) at (5,19);
  \coordinate (a) at (1,15);
  \end{pgfonlayer}
  \begin{pgfonlayer}{boardarrows}
    \draw[CadetBlue!50!Navy,very thick,->] (BB) -- +(7,7);
    \draw[Orchid!50!CadetBlue!50!Navy,very thick,->,shorten >=-12pt] (11,9) -- (A) -- ++(2,2) -- (b);
    \draw[Orchid!50!CadetBlue!50!Navy,very thick,->,shorten >=-12pt] (A) -- ++(-2,-2) -- (a);
  \end{pgfonlayer}
\end{tikzpicture}
\end{wrapfigure}
\smallskip\noindent\emph{Proof.} 
For this, we simply place some red guardians at strategic locations to keep Black on track. Here we have modified the previous position by placing two red guardian kings on the orange highlighted squares near the king, sufficient to encourage him to climb the ladder even without the obligation of the forced-jump rule. Black would face immediate recapture by the guardians should he choose not to climb the ladder, and so we don't need the forced-jump rule to convince him to do so. Similarly, we have placed red guardians on the yellow highlighted squares near all the safe-haven branching nodes, to ensure that after his brief rest, Black will indeed choose to continue on one branch or the other, rather than simply wandering off. With these changes, the situation is that Red can force Black to climb the tree (or die immediately), and so Red has a winning strategy in this position. But further, Black can choose to climb the tree as he likes, using the infinitely branching node opportunities to continue into a pristine part of the tree, stopping for respite at the resting nodes, and continuing on with which of the branches remains pristine. Thus, Black can play the position as though climbing the tree, which has high game value. Therefore every countable ordinal arises as the game value of a position in infinite draughts without the forced-jump rules.
\QEDbox\medskip\goodbreak 

Old proof of the main theorem, theorem \ref{MainTheorem}.

\begin{proof}[Proof of theorem~\ref{MainTheorem}]
The main strategy of the proof will be the same as in theorems \ref{Theorem.No-forced-iteration} and \ref{Theorem.No-forced-jump}, but we shall have to work a little harder, in light of the forced-iterated-jump rule, in order to provide a genuinely safe resting place for Black. The position will therefore involve a somewhat more complicated branching unit configuration,

The main issue with the previous arguments is that the resting place used at the main branching node in the positions of the proofs of theorems \ref{Theorem.No-forced-iteration} and \ref{Theorem.No-forced-jump} is not a suitable stopping point for Black when the forced-iterated-jump rule is in effect, since that rule would require him to continue jumping right past that square without stopping. What is needed instead is a unit branching configuration with a resting square that is truly a place of respite, an end of the iterated jumping sequence that brought Black to that juncture.

The pattern of play will be that Black will pursue an iterated jumping path that brings him to such a resting square; Red will then make a move; Black will then be able either to depart immediately on a pristine iterated jumping path proceeding to the next node, or else he will be able to place himself into a position where he has the choice of two such paths; Red will move, and then Black will choose to follow whichever of the two iterated jumping paths remains pristine. In this way, the pattern of play for Black will amount to climbing in the tree whose tree structure was implemented by the connectivity of those branching units. Thus, it will have as high a game value as desired.

Consider specifically the unit branching configuration shown here in figure \ref{Figure.Main-branching-unit-configuration}.

\begin{figure}[h]
    \centering
\begin{tikzpicture}[scale=.4] 
 \def\n{33} size of board n= number of cols
 \def\m{34} 
 \clip (2,2) rectangle (\n,\m);
 \begin{scope}[shift={(-.5,-.5)}] 
  \begin{pgfonlayer}{boardshades} 
   \clip (2.5,2.5) rectangle (\n+.5,\m+.5);
   \foreach \i in {-1,...,\n} {
    \foreach \j in {-1,...,\m} {
      \pgfmathsetmacro\c{100*abs(mod(\i+\j,2))};
      \fill [white!\c!Wheat] (\i,\j) rectangle (\i+1,\j+1);
      }
     }
  \end{pgfonlayer}
  \begin{pgfonlayer}{boardgrid} 
    \clip (2.5,2.5) rectangle (\n+.5,\m+.5);
    \draw[thin] (0,0) grid (\n+1,\m+1);
  \end{pgfonlayer}
 \end{scope}
 \draw (5,5) node[BB] (BB) {};
 \foreach \p/\q in {6/6,8/8,10/10,12/12,14/14,16/16,
 16/18,16/20,16/22,
 18/20,20/18,22/18,24/20,26/22,28/24,30/26,32/28,34/30,
 15/21,17/19,19/17,19/15,17/13,15/11,13/9,11/9,9/11,7/13,5/15,3/17,1/19,
 13/23,11/25,9/27,7/29,5/31,3/33,
 17/23,19/21,21/21,23/23,25/25,27/27,27/29,25/31,23/33,
 15/25,13/27,11/29,9/31,7/33,5/35,
 13/25,12/26,10/28,8/30,6/32,4/34,
 12/22,10/24,8/26,6/28,4/30,2/32,0/34,
 16/26,14/28,12/30,10/32,8/34
  }
   {\draw (\p,\q) node[rr] {};}
 \begin{pgfonlayer}{boardshades}
  \clip (2,2) rectangle (\n,\m);
   \foreach \p/\q in {7/7,9/9,11/11,13/13,15/15,17/17,
   15/19,17/21,19/19,
   21/17,23/19,25/21,27/23,29/25,31/27,33/29}
     {\draw (\p,\q) node[Square,fill=CadetBlue!60!Blue!40] {}; }
   \draw (15,23) node[Square,fill=Orchid!50] (A) {} node[scale=.8] {A};
   \draw (A) + (1,1) node[Square,fill=Green!30] (2) {} node[scale=.7] {2}
         (A)+(-1,-1) node[Square,fill=Green!30] (1) {} node[scale=.7] {1};
   \draw (A) + (2,2) node[Square,fill=Orange!80] (b) {} node[scale=.7] {b}
         (A) + (-2,-2) node[Square,fill=Orange!80] (a) {} node[scale=.7] {a}
         (A) + (-1,1) node[Square,fill=Orange!80] (c) {} node[scale=.7] {c};
   \foreach \p/\q in {16/20,18/18,20/16,18/14,16/12,14/10,12/8,10/10,8/12,6/14,4/16,2/18,0/20,
    12/24,10/26,8/28,6/30,4/32,2/34
     }
     {\draw (\p,\q) node[Square,fill=SpringGreen!60] {}; }
   \foreach \p/\q in {18/22,20/20,22/22,24/24,26/26,28/28,26/30,24/32,22/34,
    14/26,12/28,10/30,8/32,6/34
     }
     {\draw (\p,\q) node[Square,fill=SpringGreen!60] {}; }
   \foreach \p/\q in {13/25,11/27,9/29,7/31,5/33,
     15/27,13/29,11/31,9/33,
     11/23,9/25,7/27,5/29,3/31,1/33
     }
     {\draw (\p,\q) node[Square,fill=Yellow!65!Orange] {}; }
  \end{pgfonlayer}
  \begin{pgfonlayer}{boardarrows}
    \draw[CadetBlue!50!Navy,ultra thick,->] (BB) -- ++(12,12) -- ++(-2,2) -- ++(2,2) -- (A) -- ++(6,-6) -- (\n+1.5,\n-2.5);
    \draw[SpringGreen!30!DarkGreen,very thick,->] (1) -- ++(6,-6) -- ++(-8,-8) -- ++(-11,11);
    \draw[SpringGreen!30!DarkGreen,very thick,->] (1) -- ++(-13,13);
    \draw[SpringGreen!30!DarkGreen,very thick,->] (2) -- ++(4,-4) -- ++(8,8) -- ++(-7,7);
    \draw[SpringGreen!30!DarkGreen,very thick,->] (2) -- ++(-11,11);
    \draw[Brown,very thick,->] (b) -- ++(-10,10);
    \draw[Brown,very thick,->] (a) -- ++(-12,12);
    \draw[Brown,very thick,->] (c) -- ++(-11,11);
 \end{pgfonlayer}
\end{tikzpicture}
    \caption{Main branching unit configuration}
    \label{Figure.Main-branching-unit-configuration}
\end{figure}

Black will enter the configuration from below by climbing on the main ladder, in blue. The first thing to notice is that Black can choose simply to bypass the configuration entirely by jumping right through it, following the twists and turns of the main ladder in order to exit at the upper right. By doing so, Black would then simply proceed to the next such unit configuration appearing on on the main ladder, which will have infinitely many such configurations appearing in succession. In this way, Black can in effect choose to interact with whichever configuration on the main ladder that he desires, and it is exactly this feature that makes the current Black position represent abstractly an infinitely branching node---on this turn Black can choose from amongst the infinitely many such branching unit configurations that are available, just like choosing amongst the successor nodes in the tree that the entire position will be representing.

Although it would be legal for Black to carry out an infinite iterated jump on this turn, climbing the main ladder to infinity, doing so would cause his piece to disappear in light of the infinite-iterated-jump rule, leading to an immediate loss. Black will therefore choose instead to engage with one of the branching unit configurations appearing along the main ladder by turning left off the main ladder and coming to rest at square A, a true resting square, where his iterated jump on this turn would come to an end.

So let us consider the situation after Black has climbed the main ladder and come to rest at square A. The red pieces that he had jumped to arrive there of course have now been removed. From square A, Black aims to follow one of the green or orange exit lines leading out of this configuration, and indeed, he aims to follow one of these exit paths which remains pristine, uncorrupted by whatever moves Red makes while Black interacts with this configuration. We shall argue that indeed it is possible for him to do so. It might help to imagine that after departing the part of the position shown here, these seven exit lines begin to separate from one another and then branch apart at great distance before encountering the next unit branching configuration upon them.

The default normal play we have in mind is that Red will make a move (perhaps somewhere else), and then Black can proceed either to square 1 or square 2, so as to aim at at exit path that remains uncorrupted by the immediately preceding Red move. From either of these squares, he is able to choose between two green paths leading out of the configuration. If on his next move Red should corrupt one of those two paths, then Black can leave the configuration on the other. This green exit path then serves as the new main ladder leading to the next unit configuration.

Let us argue that indeed Red can force this line of play and thereby ultimately win. Namely, after Black arrives at square A, Red can advance the red piece directly south of square A further south. This makes both square 1 and the square south east of square A inhospitable for Black, who would face immediate capture there; and since square c is similarly inhospitable, Black will advance to square 2, after which he faces an exit on one of the two green exit lines leading out of the position. Thus, Red can force Black to follow one of the exit lines and thereby to climb the abstract tree that the is represented by the connectivity of the branching node configuration units. Since that tree has no infinite branches, eventually Red will force Black into a terminal node situation, where Red will win the game. Thus, overall the position is ultimately a win for Red.

The question now is whether Red can achieve a quicker win, that is, with a lower game value than forcing Black to climb the tree. We shall argue that no, in fact Black can succeed in his plan to exit the configuration on a pristine exit line and thereby climb the abstract tree represented by the connectivity of these unit configurations.

There are a variety of ways that Red might try to interfere with the default normal play, perhaps in attempt to secure a quicker win. So let us consider the possibilities. With Black on square A, it is possible that Red might advance a red piece directly southeast, on the now-empty square below the Black king. In this event, Black is obligated to jump, which puts him back on the main ladder leading up to succeeding unit configuration. Red has in effect kicked Black out of this particular branching unit configuration, pushing him back onto the main ladder. We shall assume that on every main ladder line, there are infinitely many branching unit configurations that represent the same abstract node in the tree we are implementing in this position, and so Black is perfectly happy simply to proceed to the next such desired node on this same main ladder. Notice, however, that Red cannot afford always to kick Black out of the position in this way, since this would cause infinitely long play, which is a draw, but Red is aiming to win. Eventually, therefore, Red will not make this move behind Black on the main ladder.

Red might alternatively attempt to interfere with the normal play by moving a piece onto square 1. In this case, Black is obligated to jump, but indeed he is happy to do so, since he can then simply proceed with iterative jumping onto the orange exit line, fulfilling his aim to exit on one of the green or orange exit lines. This line will be pristine, since the Red move to square 1 could not have disturbed the orange exit line. An exactly similar situation arises if Red should instead move a piece onto square 2, since Black would jump it and then exit on the upper orange line.

Red might attempt to interfere by advancing his piece from the central orange line to square c. This would obligate Black to jump, but again, he would be happy to do so, proceeding to exit on that central orange exit line.

So we've considered the four squares adjacent to square A and how Black would react if Red would occupy one of them, obligating an immediate Black jumping response, and in each case, Black is entirely happy with the resulting line of play, either by following a pristine exit line out of the configuration or else getting back onto the main ladder.

How else might Red interfere with the position? The main observation to make is that if we assume that the configuration was pristine when Black came to rest at square A, as well as the rest of the tree of further such configurations reached by the seven exit lines---Black would choose to interact with this configuration only in the case that this is true---then Red has only one move to change things. This move, therefore, can have affected only one of the four green exit lines. So Black can choose to move to square 1 or 2 as in the default normal line of play, placing himself in a position to choose between two pristine green exit lines. On the next move, Red can ruin only one of these, and so Black can choose to exit on the other.

Thus, we have argued altogether that from the initial position, Black will be able to choose an acceptable unit branching configuration on the main ladder and subsequently exit on a pristine exit line.

We may now complete the proof of the theorem by induction. Suppose that there are such tree-like positions having game value $\alpha_n$ for every natural number $n$. We may assemble a single main line having infinitely many unit configurations, with all seven exit lines leading to such positions of value $\alpha_n$, with infinitely many for each $n$. The resulting position, we claim, with Black ready to start climbing that ladder, will have game value at least $\sup_n\alpha_n+1$. First of all, the position is a definite win for Red, since Red can choose always to never interfere with the default normal play, and so Black will either jump to infinity and lose immediately, or else come to rest at some square A for one of the accessible configurations. From there, Black moving to square c is an immediately loss, and so he will have to proceed either to square 1 or 2, from which he is on the ladder of a position with game value $\alpha_n$ for some $n$, a win for Red. So Red can win our overall position, and the only question is the game value. We have argued, however, that for any $n$ Black can situate himself after one or two moves on one of the exit lines of a $\alpha_n$ value configuration. So our position has value at least $\alpha_n+1$, making the overall value at least $\sup_n\alpha_n+1$. (In fact, the value will be exactly $\sup_n\alpha_n+2$, since Red can make Black take two moves for each unit configuration---one move to enter it, then another to square 1 or square 2, and then finally leaving, which is the same as the entering move for the unit configuration at the next level.)

From this, it follows that the game values realized in positions of infinite draughts are unbounded in the countable ordinals. It is easy to prove by induction that the game values that are realized are closed downward---if a game position has value $\beta$ and $\alpha<\beta$, then there is a position reachable in finitely many steps having value $\alpha$. So every countable ordinal is realized as the game value of a position in infinite draughts.
\end{proof}